\documentclass[reqno, 11pt]{amsart}
\usepackage{amssymb}
\usepackage{amsmath}
\usepackage[mathscr]{euscript}
\usepackage{graphicx}
\usepackage{comment}
\usepackage{enumitem}

\usepackage[dvipsnames]{xcolor}
\usepackage{hyperref} 
\definecolor{halfgray}{gray}{0.55} 
\definecolor{webgreen}{rgb}{0,0.5,0}
\definecolor{webbrown}{rgb}{.6,0,0} \hypersetup{%
  colorlinks=true, linktocpage=true, pdfstartpage=3,
  pdfstartview=FitV,%
  breaklinks=true, pdfpagemode=UseNone, pageanchor=true,
  plainpages=false, bookmarksnumbered, bookmarksopen=true,
  bookmarksopenlevel=1,%
  hypertexnames=true,
  pdfhighlight=/O,
  urlcolor=webbrown, linkcolor=RoyalBlue,
  citecolor=webgreen, 
  pdftitle={},%
  pdfauthor={},%
  pdfsubject={2000 MAthematical Subject Classification: Primary:},%
  pdfkeywords={},%
  pdfcreator={pdfLaTeX},%
  pdfproducer={LaTeX with hyperref}%
}

\usepackage[
  textwidth=15cm,
  textheight=22.7cm,
  centering]{geometry} 

\numberwithin{equation}{section}

\theoremstyle{plain}

\newtheorem{theorem}{Theorem}

\newtheorem{proposition}[theorem]{Proposition}

\newtheorem{corollary}[theorem]{Corollary}

\theoremstyle{definition}

\theoremstyle{remark}

\theoremstyle{plain}

\newtheorem*{conjecture}{Generalized Hyperbolicity Conjecture for Shadowing}

\theoremstyle{plain}

\newtheorem{maintheorem}{Theorem}

\newtheorem{claim}{Claim}
\newcommand{\T}{\mathbb T}
\newcommand{\C}{\mathbb C}
\newcommand{\N}{\mathbb N}

\newcommand{\ran}{\operatorname{ran}}

\begin{document}
\title[The Generalized Hyperbolicity Conjecture]{Resolving the Generalized Hyperbolicity Conjecture for Shadowing}

\author[M. Pituk]{Mih\'aly Pituk}
\address{Department of Mathematics, University of Pannonia, Egyetem \'ut 10, 8200 Veszpr{\'e}m, Hungary;
HUN--REN--ELTE Numerical Analysis and Large Networks Research Group, Budapest, Hungary}
\email{pituk.mihaly@mik.uni-pannon.hu}

\subjclass[2020]{Primary: 37B65; 47A10   Secondary: 47A16}

\keywords{shadowing; hyperbolicity; generalized hyperbolicity}

\begin{abstract}
It is known that generalized hyperbolicity implies the shadowing property for invertible bounded linear operators on a Banach space. Whether the converse holds has been a central open problem in linear dynamics and has been conjectured to have a positive answer. We show that this conjecture fails on general Banach spaces by constructing a counterexample, whereas it holds on separable Hilbert spaces. The distinction is explained by the gap that may occur on Banach spaces between surjectivity and right invertibility, a gap that disappears on Hilbert spaces. The main ingredients of the proofs are a recent spectral characterization of shadowing in terms of the surjective spectrum and a new characterization of generalized hyperbolicity in terms of right resolvent functions near the unit circle.
 \end{abstract}

\maketitle

\section{Introduction and main results}\label{sec:intro}
In this paper, we resolve one of the central open problems in linear dynamics:
whether the shadowing property implies generalized hyperbolicity for
invertible bounded linear operators on a Banach space.
Due to its importance, this problem has received recurring attention in the literature~(see \cite[Problem~5.0.3]{Dani}, \cite[Problem~F]{BCDFP}, \cite[Remark~18]{LM}, \cite[Question~3.10]{Mo}).
For precision, we first introduce the necessary notation and basic concepts.

Let~$X$ be a complex Banach space. We denote by~$L(X)$ the Banach
algebra of bounded linear operators on~$X$, equipped with the operator
norm, and by~$GL(X)$ the group of invertible elements of~$L(X)$. 

\noindent 
An operator $T\in L(X)$ is called \emph{invertible} if there exists $S\in L(X)$ such that $TS=ST=I$, where~$I=I_X$ denotes the identity operator on~$X$.
We say that an operator~$T\in L(X)$ is \emph{right invertible} (\emph{left invertible}) if there exists $S\in L(X)$ such that $TS=I$ ($ST=I$). The  \emph{spectrum}, \emph{right spectrum}, \emph{left spectrum} and \emph{surjective spectrum} of~$T$ are defined by
\begin{align*}
\sigma(T)&=\{\,\lambda\in\mathbb C\mid\text{$\lambda I-T$ is not invertible}\,\},\\
\sigma_r(T)&=\{\,\lambda\in\mathbb C\mid\text{$\lambda I-T$ is not right invertible}\,\},\\
\sigma_l(T)&=\{\,\lambda\in\mathbb C\mid\text{$\lambda I-T$ is not left invertible}\,\},\\
\sigma_s(T)&=\{\,\lambda\in\mathbb C\mid\text{$\lambda I-T$ is not surjective}\,\}.
\end{align*}
The \emph{resolvent set}, \emph{right resolvent set} and \emph{left resolvent set} of~$T$ are given by
\begin{equation*}
\rho(T)=\mathbb C\setminus\sigma(T),\qquad\rho_r(T)=\mathbb C\setminus\sigma_r(T),\qquad \rho_l(T)=\mathbb C\setminus\sigma_l(T).
\end{equation*}
The \emph{spectral radius} of~$T$ is defined by $r(T)=\sup_{\lambda\in\sigma(T)}|\lambda|$. As usual, $\ker T$ and $\operatorname{ran} T$ denote the kernel and the range of~$T$, respectively.

It is known (see, e.g., \cite[Theorem~2.12]{B}) that an operator~$T\in L(X)$ on a complex Banach space~$X$ is right invertible if and only if $T$ is surjective and $\ker T$ is complemented in~$X$. Consequently, $
\sigma_s(T)\subseteq\sigma_r(T)$,
and the inclusion may be strict on Banach spaces. If~$X$ is a complex Hilbert space, then every closed subspace is complemented in~$X$, hence $\sigma_s(T)=\sigma_r(T)$.

\emph{Shadowing} is one of the fundamental concepts in the theory of dynamical systems. Recall its definition in the setting of linear dynamics~\cite{Pal}, \cite{Pil}. Given $\delta>0$, a sequence $(x_n)_{n\in\mathbb Z}$ in~$X$ is called a \emph{$\delta$-pseudotrajectory} of an operator~$T\in GL(X)$ if 
\begin{equation*}
\|x_{n+1}-Tx_n\|\leq\delta\qquad\text{for all $n\in\mathbb Z$}.	
\end{equation*}
An operator $T\in GL(X)$ is said to have the \emph{shadowing property} if,  for every $\epsilon>0$, there exists $\delta>0$ such that every $\delta$-pseudotrajectory $(x_n)_{n\in\mathbb Z}$ of~$T$ is $\epsilon$-shadowed by an exact trajectory of~$T$; that is, there exists $v\in X$ such that
\begin{equation*}
\|x_n-T^n v\|\leq\epsilon\qquad\text{for all $n\in\mathbb Z$.} 
\end{equation*}

In a recent paper~\cite{DP}, we established the following spectral characterization of the shadowing property. 
\begin{theorem}[{\cite[Theorem~2.2]{DP}}]\label{thm:specban}
Let $X$ be a complex Banach space and let $T\in GL(X)$. Then~$T$ has the shadowing property if and only if $\sigma_s(T)\cap\mathbb T=\emptyset$.
\end{theorem}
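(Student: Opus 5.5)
The natural route is to recast shadowing as a statement about a single operator on the sequence space $\ell^\infty(\mathbb Z,X)$. For a $\delta$-pseudotrajectory put $y_n=x_{n+1}-Tx_n$, so $\|y\|_\infty\le\delta$. If $z_n=x_n-T^nv$, then $z_{n+1}-Tz_n=y_n$. Define the bounded operator
\begin{equation*}
\mathcal T\colon\ell^\infty(\mathbb Z,X)\to\ell^\infty(\mathbb Z,X),\qquad (\mathcal T z)_n=z_{n+1}-Tz_n .
\end{equation*}
A scaling argument then shows that shadowing is equivalent to surjectivity of $\mathcal T$. For the forward direction, take $\epsilon=1$ and the corresponding $\delta$. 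Given $y$ with $\|y\|\le\delta$, define $x$ recursively from $x_0=0$; it need not be bounded, but it is a pseudotrajectory. Shadowing produces $v$ with $z=x-T^nv$ bounded and $\mathcal T z=y$. Conversely, if $\mathcal T$ is surjective, the open mapping theorem gives a constant $K$ such that every $y$ has a preimage $z$ with $\|z\|\le K\|y\|$. Given a pseudotrajectory $x$, let $z$ be such a preimage of $y$. Then $x-z$ solves the homogeneous equation, so $x_n-z_n=T^n v$ with $v=x_0-z_0$, and therefore $\|x_n-T^nv\|\le K\delta$.

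The task thus reduces to showing that $\mathcal T$ is surjective if and only if $\lambda I-T$ is surjective for every $|\lambda|=1$.

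\emph{Necessity.} Suppose $\lambda\in\mathbb T$ and $\lambda I-T$ is not surjective. Since $\mathcal T$ is surjective, there is $K$ such that every $y$ has a preimage of norm at most $K\|y\|$. Fix a unit vector $w$ (a target chosen later) and set $y_n=\lambda^{n}\phi_N(n)w$, where $\phi_N$ is a slowly varying bump, say a triangle of width $N$. Write the preimage as $z_n=\lambda^n u_n$. Then
\begin{equation*}
\lambda u_{n+1}-Tu_n=\phi_N(n)w .
\end{equation*}
Averaging over $n$ with suitable weights gives
\begin{equation*}
(\lambda I-T)\Big(\sum_n c_nu_n\Big)\approx w,
\end{equation*}
with an error of order $K/N$ in norm and a controlled bound on the preimage. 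This says that the range of $\lambda I-T$ is dense in a quantitative way: every $w$ lies within $O(1/N)$ of the image of a bounded set. The standard lemma (the argument behind the open mapping theorem) then forces surjectivity, a contradiction.

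\emph{Sufficiency.} Suppose $\lambda I-T$ is surjective for every $\lambda\in\mathbb T$. The surjective spectrum is closed, so $\sigma_s(T)$ lies in $\{|\lambda|<1\}\cup\{|\lambda|>1\}$. The hard part is that with no complemented kernels there is no Riesz-type splitting of $X$. The plan is to pass to the dual: $\lambda I-T$ is surjective exactly when $\lambda I-T^*$ is bounded below. I would then show that $\mathcal T$ is surjective by showing that its adjoint restricted to $\ell^1(\mathbb Z,X^*)\subset(\ell^\infty)^*$, the operator $(\mathcal S\phi)_n=\phi_{n-1}-T^*\phi_n$, is bounded below, and that this suffices.

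To prove that $\mathcal S$ is bounded below, I would take the Fourier transform in $n$. On the circle the symbol is $\bar\lambda I-T^*$, which is bounded below uniformly in $\lambda\in\mathbb T$ by compactness and the assumption. Bounded below pointwise on the symbol does not, however, directly give bounded below on $\ell^1$ sequences. I would therefore build an $\ell^1$-bounded approximate left inverse by smoothing over an annulus, using the analytic left-inverse-type estimates available for the adjoint near $\mathbb T$. Then I would use the fact that an adjoint which is bounded below on a weak$^*$-dense norming subspace forces surjectivity of $\mathcal T$, via a closed range argument on $\ell^\infty$.

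This last step, getting uniform lower bounds in sequence norms from pointwise surjectivity on $\mathbb T$ without any splitting of $X$, is the main obstacle. A fallback is to approximate $\mathcal T$ by truncations on $\ell^\infty$ over finite windows and solve with a uniform bound obtained by compactness of $\mathbb T$.
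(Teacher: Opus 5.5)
The paper does not prove this statement; it imports it from \cite{DP}, so your proposal has to be judged on its own. The easy half is fine. Recasting shadowing as surjectivity of $\mathcal T$ on $\ell^\infty(\mathbb Z,X)$ is correct, and so is the necessity argument (modulated data $\lambda^n w$, averaging in $n$, then the open-mapping iteration; the bump is not even needed). The genuine gap is the sufficiency direction, which is where the content of the theorem lies. The step you call the main obstacle is passing from the pointwise bound $\|(\mu I-T^*)\psi\|\ge c\|\psi\|$, $\mu\in\T$, to a lower bound for $\mathcal S$ on $\ell^1(\mathbb Z,X^*)$. It is not carried out, and the tool you propose for it is not available. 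The hypothesis only says that $\lambda I-T^*$ is bounded below. In a Banach space this yields no bounded left inverse, since the range need not be complemented, let alone an analytic family that could be smoothed over an annulus. The primal analogue, a bounded right inverse of $\lambda I-T$, genuinely fails under exactly this hypothesis; that is what Theorem~A of the paper shows (Claims~4 and~5). Without a linear solution operator, the Fourier transform gives only $\|\phi_n\|\le\sup_{\mu\in\T}\|\sum_k\mu^k\phi_k\|\le c^{-1}\|\mathcal S\phi\|_1$ for each $n$. That is an $\ell^\infty$ bound, not the $\ell^1$ bound you need. The finite-window fallback runs into the same obstacle, and $X$ provides no compactness for passing to the limit.

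Two ingredients would close the argument. First, your claim that a lower bound on the weak$^*$-dense norming subspace $\ell^1(\mathbb Z,X^*)$ suffices is not a valid general principle. For a Banach limit $\mathrm{LIM}$ and $\mathbf 1=(1,1,\dots)$, the operator $Ax=x-\mathrm{LIM}(x)\mathbf 1$ on $\ell^\infty$ satisfies $\|A^*f\|\ge\|f\|_1$ for $f\in\ell^1$, yet $\ran A\subseteq\ker\mathrm{LIM}$. What is true here is that $\ell^1(\mathbb Z,X^*)=c_0(\mathbb Z,X)^*$ and $\mathcal S=(\mathcal T|_{c_0(\mathbb Z,X)})^*$, so a lower bound for $\mathcal S$ gives surjectivity of $\mathcal T$ on $c_0(\mathbb Z,X)$. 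Conjugating by the weights $e^{\eta|n-k|}$ is a perturbation of norm at most $e^{\eta}-1$. For small $\eta$ this gives, uniformly in $k$, solutions for data supported at $n=k$ that decay like $e^{-\eta|n-k|}$. Summing these solves $\mathcal Tz=y$ for every $y\in\ell^\infty(\mathbb Z,X)$.

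Second, and essentially, the lower bound for $\mathcal S$ has to come from a non-linear argument, for instance the Eisenberg--Hedlund theorem applied to $T^*$. Since $\sigma_a(T^*)=\sigma_s(T)$ misses $\T$, $T^*$ is uniformly expansive: $\max(\|T^{*m_0}\psi\|,\|T^{*-m_0}\psi\|)\ge2\|\psi\|$ for some $m_0$. Applying this once more to $T^{*\pm m_0}\psi$ upgrades it to $\max(\|T^{*m}\psi\|,\|T^{*-m}\psi\|)\ge4\|\psi\|$ with $m=2m_0$. Because $\|\phi_{n\mp m}-T^{*\pm m}\phi_n\|\le C\sum_{|j|\le m}\|(\mathcal S\phi)_{n+j}\|$, summing over $n$ gives $4\|\phi\|_1\le2\|\phi\|_1+C(2m+1)\|\mathcal S\phi\|_1$. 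Your proposal contains nothing that replaces this step, so as written only the necessity half is established.
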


As noted above, if~$X$ is a Hilbert space, then $\sigma_s(T)=\sigma_r(T)$. Thus, Theorem~\ref{thm:specban} yields the following corollary.

\begin{corollary}[{\cite[Theorem~1.7]{P}}]\label{cor:spechilb}
Let $X$ be a complex Hilbert space and let $T\in GL(X)$. Then~$T$ has the shadowing property if and only if $\sigma_r(T)\cap\mathbb T=\emptyset$.
\end{corollary}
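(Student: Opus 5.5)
The corollary should follow from Theorem~\ref{thm:specban} once we show that $\sigma_s(T)=\sigma_r(T)$ whenever $X$ is a complex Hilbert space. Theorem~\ref{thm:specban} says that $T\in GL(X)$ has the shadowing property if and only if $\sigma_s(T)\cap\mathbb T=\emptyset$. Substituting $\sigma_r(T)$ for $\sigma_s(T)$ gives exactly the statement, and nothing about the dynamics has to be revisited.

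To prove $\sigma_s(T)=\sigma_r(T)$, we show that for each $\lambda\in\mathbb C$ the operator $A=\lambda I-T$ is right invertible if and only if it is surjective.

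\emph{Right invertible implies surjective.} If $AS=I$, then $A$ is surjective, so $\sigma_s(T)\subseteq\sigma_r(T)$ on any Banach space.

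\emph{Surjective implies right invertible.} Suppose $A$ is surjective. The kernel $\ker A$ is closed, so the Hilbert space structure gives the orthogonal decomposition $X=\ker A\oplus(\ker A)^\perp$. The restriction $A|_{(\ker A)^\perp}\colon(\ker A)^\perp\to X$ is then injective. It is also surjective, because $A$ vanishes on $\ker A$ and so every value of $A$ is already attained on $(\ker A)^\perp$. Since $(\ker A)^\perp$ is a closed subspace, hence a Banach space, the bounded inverse theorem makes this restriction a bounded bijection with bounded inverse $B$. Setting $S=\iota\circ B$, where $\iota$ is the inclusion of $(\ker A)^\perp$ into $X$, gives $S\in L(X)$ with $AS=I$.

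Equivalently, one may cite the characterization already recalled in the paper (\cite[Theorem~2.12]{B}): $A$ is right invertible if and only if $A$ is surjective and $\ker A$ is complemented. In a Hilbert space the orthogonal complement always supplies the complement, so the direct argument above is only a self-contained version of that citation.

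No step is a real obstacle. The only point needing care is that the inverse of the restricted operator is bounded, and the open mapping theorem settles this because $(\ker A)^\perp$ is complete.
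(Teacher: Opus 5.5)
Your proposal is correct and follows the paper's route: the paper also observes that closed subspaces of a Hilbert space are complemented, so $\sigma_s(T)=\sigma_r(T)$ by \cite[Theorem~2.12]{B}, and then reads the corollary off Theorem~\ref{thm:specban}. Your direct argument (restricting $\lambda I-T$ to $(\ker(\lambda I-T))^\perp$ and applying the bounded inverse theorem) just makes the Hilbert-space case of that cited result explicit.
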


Besides shadowing, another fundamental concept in dynamical systems is \emph{hyperbolicity}.
We say that an operator $T\in GL(X)$ is \emph{hyperbolic} if $\sigma(T)\cap\mathbb T=\emptyset$, where~$\mathbb T$ denotes the unit  circle in the complex plane. By the Riesz decomposition theorem,  $T\in GL(X)$ is hyperbolic if and only if $X$ admits a direct-sum decomposition $X=M\oplus N$, where $M$ and~$N$ are closed subspaces of~$X$ such that 
\[
T(M)=M,\qquad T(N)=N,
\]
and the restrictions $T|_M\in L(M)$ and $T^{-1}|_N\in L(N)$ have spectral radius less than one; that is, 
\begin{equation*}
r(T|_M)<1\quad\text{and}\quad r(T^{-1}|_N)<1.
\end{equation*}
Generalized hyperbolicity extends classical hyperbolicity by weakening the invariance conditions. More precisely, $T\in GL(X)$ is called \emph{generalized hyperbolic} if $X$ admits a direct-sum decomposition $X=M\oplus N$, where $M$ and~$N$ are closed subspaces of~$X$ such that 
\[
T(M)\subseteq M,\qquad T^{-1}(N)\subseteq N,
\]
and
\[
r(T|_M)<1,\qquad r(T^{-1}|_N)<1.
\]
This class of operators first appeared in the work of Bernardes et al.~\cite{Ber}. The terminology ``generalized hyperbolic'' was introduced later by Cirilo et al.~\cite{Cir}, who established several important dynamical properties of generalized hyperbolic operators.

It has long been known that invertible hyperbolic operators on Banach
spaces have the shadowing property and, for invertible operators on finite-dimensional spaces, the shadowing property is equivalent to hyperbolicity.  In~\cite{Ber}, Bernardes et al. constructed a nonhyperbolic operator with
the shadowing property, thereby settling a long-standing open problem
in the literature. A key ingredient in their construction was the following result (see also \cite[Theorem~1]{BCDFP}).

\begin{theorem}[{\cite[Theorem~A]{Ber}}]\label{thm:specuniexp}
Let $X$ be a complex Banach space and let $T\in GL(X)$. If~$T$ is generalized hyperbolic, then~$T$ has the shadowing property.
\end{theorem}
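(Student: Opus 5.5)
Since Theorem~\ref{thm:specban} is available, the plan is to reduce Theorem~\ref{thm:specuniexp} to a spectral statement. It suffices to show that if $T$ is generalized hyperbolic with splitting $X=M\oplus N$, then $\lambda I-T$ is surjective for every $\lambda\in\mathbb T$. (In fact I will aim to show it is right invertible, which is stronger.) Let $P$ and $Q=I-P$ denote the projections onto $M$ and $N$, and put $A=T|_M\in L(M)$ and $B=T^{-1}|_N\in L(N)$. By hypothesis $r(A)<1$ and $r(B)<1$. Hence for $|\lambda|=1$ both $\lambda I_M-A$ and $I_N-\lambda B$ are invertible, with inverses given by Neumann series.

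Fix $\lambda\in\mathbb T$ and $y\in X$, and write $y=Py+Qy$. For the $M$-component, set $u=(\lambda I_M-A)^{-1}Py\in M$. Since $T u=Au$, this gives $(\lambda I-T)u=Py$.

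For the $N$-component I use $T^{-1}$. On $N$ we have $\lambda I-T=-T(I-\lambda T^{-1})$, and $T^{-1}$ maps $N$ into $N$. Set $w=-(I_N-\lambda B)^{-1}T^{-1}Qy$. This makes sense: $Qy\in N$, but $T^{-1}Qy$ need not be in $N$. So I instead look for $w\in N$ with $(I-\lambda T^{-1})w=-T^{-1}Qy$. Since $T^{-1}Qy\notin N$ in general, I write $z=T^{-1}Qy$ and note that we only need $(\lambda I-T)w=Qy$, i.e. $\lambda w-Tw=Qy$. Apply $T^{-1}$ to get $\lambda T^{-1}w-w=T^{-1}Qy$. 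The left side lies in $N$ when $w\in N$, but the right side need not. This is the main obstacle: the splitting is not $T$-invariant, so the two components interact.

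To handle it, I solve instead on $N$ the equation $w-\lambda Bw=Qy$ for $w\in N$, which is possible because $r(B)<1$, and then take $v=-\lambda^{-1}\,T w$. Check: $(\lambda I-T)(Tw)=T(\lambda w-Tw)$, which is not directly helpful. So instead I set $w=(I_N-\lambda B)^{-1}Qy$ and $v=-\lambda Bw=-\lambda T^{-1}w$. Then
\begin{equation*}
(\lambda I-T)v=-\lambda^2T^{-1}w+\lambda w=\lambda(w-\lambda Bw)=\lambda Qy .
\end{equation*}
Replacing $v$ by $\lambda^{-1}v$ gives $(\lambda I-T)(\lambda^{-1}v)=Qy$.

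Define $R_\lambda y=u+\lambda^{-1}v$. This is a bounded linear operator, built from $P$, $Q$, $T^{-1}$ and the two Neumann-series inverses, and it satisfies $(\lambda I-T)R_\lambda=I$. So $\lambda I-T$ is right invertible, hence surjective, and therefore $\sigma_s(T)\cap\mathbb T\subseteq\sigma_r(T)\cap\mathbb T=\emptyset$. Theorem~\ref{thm:specban} now gives the shadowing property. The only delicate step is the $N$-component, where $T$ need not map $N$ into itself. This is circumvented by working with $T^{-1}|_N$, which does leave $N$ invariant, and by writing the solution in the form $v=-\lambda T^{-1}w$.
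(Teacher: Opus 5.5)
Your proof is correct, and it follows the alternative route the paper points out after Corollary~\ref{cor:ghrh} (the statement itself is only cited from \cite{Ber}): your operator $R_\lambda=(\lambda I_M-A)^{-1}P-B(I_N-\lambda B)^{-1}Q$ is exactly the right inverse built in the necessity part of Theorem~\ref{thm:C}, and $\sigma_s(T)\subseteq\sigma_r(T)$ together with Theorem~\ref{thm:specban} then gives shadowing. One side remark is false but harmless: since $T^{-1}(N)\subseteq N$, the vector $T^{-1}Qy$ always lies in $N$, so your first attempt $w=-(I_N-\lambda B)^{-1}T^{-1}Qy$ already solves $(\lambda I-T)w=Qy$ and coincides with your final $\lambda^{-1}v$ --- the ``obstacle'' you describe does not actually arise.
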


Whether the converse of Theorem~\ref{thm:specuniexp} holds has been one of the central open problems in linear dynamics. In \cite[Theorem~18]{BeMe},
Bernardes and Messaoudi characterized the scalar weighted shifts on
$\ell^p$, $1\leq p<\infty$, and on $c_0$ that have the shadowing
property. Their characterization implies that, for classical bilateral backward weighted shifts, the shadowing property is equivalent to generalized hyperbolicity (see \cite[Theorem~2.4.5]{Dani}). D'Aniello, Darji and Maiuriello~\cite{Dani} established the same equivalence for a large class of composition operators on $L^p$-spaces and posed the question of whether
the shadowing property implies generalized hyperbolicity (see \cite[Problem~5.0.3]{Dani}). 
In several related works, the expected affirmative answer was explicitly mentioned as a conjecture (see, e.g., \cite{AMV}, \cite{LM1}, \cite{LM2}).
Thus, we state it as follows.

\begin{conjecture}
\label{conj:generalized-hyperbolicity}
Let \(X\) be a complex Banach space and let \(T\in GL(X)\). If \(T\) has the
shadowing property, then \(T\) is generalized hyperbolic.
\end{conjecture}

The purpose of the present paper is to settle this conjecture. We show that it is false on general Banach spaces, but true on separable Hilbert spaces.
\begin{maintheorem}\footnote{After the manuscript had been completed, the author became aware that Theorem A had also been proved independently by Messaoudi et al.~\cite[Theorem~3.9]{MNST}.}\label{thm:A}
There exist a complex Banach space~$X$ and an operator $T\in GL(X)$
such that $T$ has the shadowing property but is not generalized
hyperbolic.
\end{maintheorem}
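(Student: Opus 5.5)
The plan is to separate shadowing from generalized hyperbolicity using the gap between $\sigma_s(T)$ and $\sigma_r(T)$. By Theorem~\ref{thm:specban}, shadowing only requires $\sigma_s(T)\cap\mathbb T=\emptyset$. I will show that generalized hyperbolicity forces the stronger condition $\sigma_r(T)\cap\mathbb T=\emptyset$. It then suffices to build $T\in GL(X)$ such that $\lambda I-T$ is surjective for every $|\lambda|=1$, but for some $\lambda_0\in\mathbb T$ the kernel $\ker(\lambda_0 I-T)$ is not complemented in $X$.

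\textbf{Step 1 (generalized hyperbolicity implies $\sigma_r(T)\cap\mathbb T=\emptyset$).} Let $X=M\oplus N$ with projection $P$ onto $M$ along $N$, where $T(M)\subseteq M$, $T^{-1}(N)\subseteq N$, $r(T|_M)<1$ and $r(T^{-1}|_N)<1$. For $|\lambda|=1$ I define
\[
R_\lambda x=\sum_{n\ge0}\lambda^{-n-1}T^nPx-\sum_{n\ge1}\lambda^{n-1}T^{-n}(I-P)x .
\]
Both series converge in norm by the spectral radius conditions, because the powers stay inside $M$ and $N$ respectively. Telescoping gives $(\lambda I-T)\sum_{n\ge0}\lambda^{-n-1}T^nPx=Px$. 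Similarly $-(\lambda I-T)\sum_{n\ge1}\lambda^{n-1}T^{-n}(I-P)x=(I-P)x$, which uses $T\cdot T^{-1}=I$ on the $N$-part. Hence $(\lambda I-T)R_\lambda=I$. So $\lambda I-T$ is right invertible for all $\lambda\in\mathbb T$, and its kernel is complemented. Note that $R_\lambda$ need not map into any invariant subspace, which is fine. This step is routine.

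\textbf{Step 2 (the example).} I start from a bounded surjection $Q\colon E\to F$ between Banach spaces whose kernel is not complemented, for instance the canonical quotient map $\ell^1\to\ell^2$, or $\ell^\infty\to\ell^\infty/c_0$. The idea is to embed $Q$ into a bilateral weighted-shift-type operator $T$. I take $X$ to be an $\ell^1$-sum indexed by $\mathbb Z$ of copies of $E$ and $F$, arranged so that $T$ is invertible: far to the left it behaves like an expanding shift (weights $2$), far to the right like a contracting shift (weights $1/2$), and at one junction the coordinate map involves $Q$ together with an invertible completion, e.g.\ acting on $E\oplus F$ via a block matrix. The design goals are the following.
\begin{enumerate}[label=(\roman*)]
\item Solving $(\lambda I-T)x=y$ for $|\lambda|=1$ reduces, by geometric series in both tails, to a single equation at the junction. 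That equation is solvable because $Q$ is surjective, which by the open mapping theorem also gives uniform bounds. Hence $\sigma_s(T)\cap\mathbb T=\emptyset$.
\item $\ker(\lambda_0 I-T)$ is isomorphic to $\ker Q$ (or to a space containing it complementably). Moreover, a bounded projection onto it would compose with coordinate maps to give a projection of $E$ onto $\ker Q$.
\end{enumerate}
Then Theorem~\ref{thm:specban} gives shadowing, while Step~1 shows that $T$ is not generalized hyperbolic.

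The main obstacle is Step~2: reconciling invertibility of $T$ with the presence of a non-injective surjection $Q$ in the dynamics. I would first try a direct sum $X=X_-\oplus X_+$ of a "stable" and an "unstable" weighted shift, with $T$ triangular relative to this splitting and an off-diagonal coupling built from $Q$. Triangularity makes invertibility of $T$ automatic from invertibility of the diagonal shifts. It also makes surjectivity of $\lambda I-T$ on $\mathbb T$ reduce to surjectivity of $Q$. Finally, the kernel computation should identify $\ker(\lambda_0 I-T)$ with a copy of $\ker Q$ sitting inside a coordinate copy of $E$, so complementedness of the kernel in $X$ would transfer to complementedness of $\ker Q$ in $E$, a contradiction.
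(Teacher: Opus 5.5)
\textbf{Where the proposal stands.} Your reduction and Step~1 are correct and follow the paper. Step~1 is the necessity half of Theorem~C (your $R_\lambda$ is the paper's series after reindexing), so it suffices to find $T\in GL(X)$ with $\sigma_s(T)\cap\mathbb T=\emptyset$ but $\sigma_r(T)\cap\mathbb T\neq\emptyset$. However, Theorem~A is an existence statement whose entire content is that example, and Step~2 does not produce one. No operator is written down, (i) and (ii) are only design goals, and you leave the ``main obstacle'' (invertibility of $T$ versus a non-injective $Q$) open.

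\textbf{The triangular architecture cannot work as stated.} Suppose $T=\begin{pmatrix}T_-&C\\0&T_+\end{pmatrix}$ with $T_-$ stable and $T_+$ unstable, i.e.\ $\sigma(T_-)\subset\{|z|<1\}$ and $\sigma(T_+)\subset\{|z|>1\}$. Then $\lambda I-T$ is invertible for every $\lambda\in\mathbb T$, whatever the coupling $C$ is. So $T$ is hyperbolic and $Q$ plays no role. More generally, if $\lambda I-T_\pm$ have right inverses $R_\pm$, then $\begin{pmatrix}R_-&R_-CR_+\\0&R_+\end{pmatrix}$ is a right inverse of $\lambda I-T$. Hence, unless the obstruction already sits inside a diagonal block, a coupling through $Q$ can create it only if the diagonal blocks are non-hyperbolic and of opposite types on $\mathbb T$. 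The lower block must be surjective with a large kernel, the upper block must be non-surjective, and $C$ must carry that kernel onto the upper block's cokernel. In your bilateral picture, this means the $E$- and $F$-channels need opposite weight profiles, which your single profile does not provide.

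\textbf{The missing idea.} What is missing is a mechanism that makes $T$ invertible while keeping a non-injective surjection. The paper gets invertibility for free by a companion-matrix linearization. It takes the non-invertible $S(x,e)=(q_{c_0}(e_1),Be)$ on $Y=(\ell^\infty/c_0)\oplus_1\ell^1(\mathbb N,\ell^\infty)$. For this $S$, $S+\alpha I$ is surjective when $|\alpha|<1$, but $S$ has no bounded right inverse, since one would yield a right inverse of $q_{c_0}$, contradicting Phillips' theorem. It then sets $T(x,y)=(y,x-3Sy)$ on $Y\oplus_1Y$, which is invertible for any $S$. Surjectivity of $\lambda I-T$ reduces to that of $S+\alpha_\lambda I$ with $\alpha_\lambda=(\lambda^2-1)/(3\lambda)$ and $|\alpha_\lambda|\le 2/3$. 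A right inverse of $I-T$ would produce one of $S$.

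\textbf{How your junction idea could be completed.} On $\ell^1(\mathbb Z;E)\oplus_1\ell^1(\mathbb Z;F)$, let $T(x,y)=(T_Ex,\,T_Fy+Cx)$, where:
$(T_Ex)_n=a_{n+1}x_{n+1}$ with $a_m=\tfrac12$ for $m\le0$ and $a_m=2$ for $m\ge1$;
$(T_Fy)_n=b_{n+1}y_{n+1}$ with $b_m=2$ for $m\le0$ and $b_m=\tfrac12$ for $m\ge1$;
and $Cx$ is $Qx_1$ placed at index $0$.
Then $T$ is invertible, being triangular with invertible diagonal blocks. For $\lambda\in\mathbb T$, $\lambda I-T_E$ is surjective with kernel $\{x(e):e\in E\}$, where $x(e)_0=e$. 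Also $\lambda I-T_F$ is injective with range $\ker\Phi_\lambda$, for a bounded surjection $\Phi_\lambda$ onto $F$ with $\Phi_\lambda(Cx(e))=\tfrac12Qe$. Hence $\lambda I-T$ is surjective for every $\lambda\in\mathbb T$, and $\ker(\lambda I-T)=\{(x(e),0):e\in\ker Q\}$. With $\pi_0(x,y)=x_0$, the map $e\mapsto\pi_0P(x(e),0)$ would turn a projection $P$ onto this kernel into a projection of $E$ onto $\ker Q$, a contradiction. None of this is in your proposal, so as written the proof of Theorem~A is missing its essential step.
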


\begin{maintheorem}\footnote{In~\cite[Theorem~3.5]{MNST}, Messaoudi et al. proved that, for invertible operators on Hilbert spaces, the shadowing property is equivalent to pseudo-hyperbolicity. However, pseudo-hyperbolicity is weaker than generalized hyperbolicity, since it does not require a decomposition $X=M\oplus N$ into closed subspaces satisfying $T(M)\subseteq M$ and $T^{-1}(N)\subseteq N$. Thus, it does not settle the generalized hyperbolic conjecture.}
\label{thm:B}
Let $X$ be a separable complex Hilbert space and let $T\in GL(X)$.
Then $T$ has the shadowing property if and only if it is generalized
hyperbolic.
\end{maintheorem}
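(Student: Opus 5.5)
One direction is Theorem~\ref{thm:specuniexp}. For the converse, Corollary~\ref{cor:spechilb} reduces the problem to the following: if $\sigma_r(T)\cap\mathbb T=\emptyset$, we must produce closed subspaces $M,N$ with $X=M\oplus N$, $T(M)\subseteq M$, $T^{-1}(N)\subseteq N$, $r(T|_M)<1$ and $r(T^{-1}|_N)<1$.

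Since $\sigma_r(T)$ is closed, it misses a thin annulus $A=\{1-\eta<|\lambda|<1+\eta\}$. The plan has four steps.

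\textbf{Step 1: a holomorphic right resolvent.} We want a holomorphic family $R(\lambda)$ on a neighbourhood of $\mathbb T$ with $(\lambda I-T)R(\lambda)=I$. On a Hilbert space there is a natural choice: the Moore--Penrose right inverse
\[
(\lambda I-T)^*\bigl((\lambda I-T)(\lambda I-T)^*\bigr)^{-1}.
\]
This is only real-analytic in $\lambda$, so it cannot be used directly. Instead we would invoke a Shubin/Allan-type theorem: on a domain where $\lambda I-T$ is right invertible, a holomorphic right inverse exists. Such a family is available on any annulus, because on an annulus every holomorphic line bundle is trivial (Bungart/Grauert-type triviality). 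On a Hilbert space, or more generally when the kernels form a complemented holomorphic family, this can be done via holomorphic projections onto $\ker(\lambda I-T)$.

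This is where separability and the Hilbert structure enter. The kernels $\ker(\lambda I-T)$ form a holomorphic Hilbert bundle over $A$, and its triviality (Kuiper-type arguments, or Bungart's theorem for separable fibres) yields holomorphic complements, hence a holomorphic right inverse $R$ on $A$. I expect this step to be the main obstacle, and I would try to cite Shubin's theorem on holomorphic families of subspaces directly.

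\textbf{Step 2: a Riesz-type projection.} Define
\[
P=\frac{1}{2\pi i}\oint_{|\lambda|=1}R(\lambda)\,d\lambda.
\]
The resolvent identity fails for one-sided inverses, so $P$ need not be an idempotent. Instead, following the characterization hinted at in the abstract, we would seek subspaces directly. Set
\[
M=\Bigl\{x:\ \lambda\mapsto R(\lambda)x \text{ extends holomorphically to } |\lambda|>1-\eta\Bigr\},
\]
or, more concretely, take $M=\ker(I-P)$ and $N=\ker P$ after first correcting $R$ so that $P$ becomes a projection.

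\textbf{Step 3: invariance and spectral radius estimates.} With the splitting in hand:

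\begin{itemize}
\item[$\bullet$] We have $(\lambda I-T)R(\lambda)=I$, so $TR(\lambda)=\lambda R(\lambda)-I$. Integrating gives $TP=PT'$-type relations, which should give $T(M)\subseteq M$.
\item[$\bullet$] For the bound $r(T|_M)<1$, show that $(\lambda I-T|_M)$ is invertible on $M$ for $|\lambda|\ge 1-\eta/2$. The inverse should be $R(\lambda)|_M$, compressed appropriately, which extends holomorphically outward.
\item[$\bullet$] The inclusion $T^{-1}(N)\subseteq N$ and the bound $r(T^{-1}|_N)<1$ follow symmetrically, using the identity $\mu I-T^{-1}=-\mu T^{-1}(\mu^{-1}I-T)$.
\end{itemize}

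\textbf{Step 4: normalizing $R$.} The delicate point is arranging that the correction of $R$ makes $P$ idempotent, for instance by choosing $R$ with $R(\lambda)=R_0\,$-compatible, i.e.\ such that the holomorphic parts inside and outside split. To do this, decompose $R$ on the annulus into its Laurent parts: the part holomorphic inside the disc and the part holomorphic outside it. Then use the right-inverse equation to check that the inner part gives $M$ and the outer part gives $N$.
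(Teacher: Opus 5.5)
You correctly reduce the problem via Corollary~\ref{cor:spechilb}. However, there is a genuine gap exactly where you locate the difficulty yourself (Steps~2 and~4), and it is not a technicality: a holomorphic family of right inverses does not suffice. If $R(\lambda)=\sum_n C_n\lambda^n$ only satisfies $(\lambda I-T)R(\lambda)=I$ on the annulus, comparing Laurent coefficients gives only $C_{-n}=T^{n-1}C_{-1}$ for $n\ge1$ and $C_n=-T^{-n-1}(I-C_{-1})$ for $n\ge0$. For $P=C_{-1}$ this gives exponential decay of $T^nP$ and $T^{-n}(I-P)$. It gives neither $P^2=P$ nor $T(\operatorname{ran}P)\subseteq\operatorname{ran}P$, and no argument that uses only the right-inverse equation can produce them. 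For instance, let $R_0$ be a right resolvent function on the annulus, with $C_{-1}=P_0$. Then $R_0(\lambda)+\tfrac12\lambda^{-1}\bigl(I-R_0(\lambda)(\lambda I-T)\bigr)$ is again a holomorphic right inverse family, but its coefficient is $C_{-1}=\tfrac12(P_0+T^{-1}P_0T)$. This is idempotent only if $P_0$ commutes with $T$, i.e.\ only when $T$ is hyperbolic. Your Step~1 is also not where the separability or Hilbert structure is needed: by Allan's theorem, holomorphic right inverses exist on any open connected subset of $\rho_r(T)$ on an arbitrary Banach space.

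The missing ingredient is a \emph{right resolvent function}: a right inverse family satisfying $R(\lambda)-R(\mu)=(\mu-\lambda)R(\lambda)R(\mu)$. Equivalently, a single closed subspace $E=\operatorname{ran}R(\lambda)$ must complement every $\ker(\lambda I-T)$ on the annulus. This is what the paper obtains, and it is where separability is used. The paper first shows that $\dim\ker(\lambda I-T)$ is locally constant on $\rho_r(T)$: near $\lambda_0$ one has $X=\ker(\lambda I-T)\oplus\operatorname{ran}R_0$, so all these kernels are isomorphic to $X/\operatorname{ran}R_0$. Hence the nullity is constant on a connected annulus $\mathbb T_\epsilon$ with $\overline{\mathbb T_\epsilon}\subset\rho_r(T)$, and Proposition~\ref{prop:sapostol} then gives a right resolvent function. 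With the resolvent equation available, for $\rho<|\mu|$,
\[
PR(\mu)=\frac{1}{2\pi i}\oint_{|\lambda|=\rho}R(\lambda)R(\mu)\,d\lambda=\sum_{n\ge1}\mu^{-n}C_{-n}.
\]
So $PC_{-n}=C_{-n}$ for $n\ge1$ and $PC_n=0$ for $n\ge0$, which yields $P^2=P$, $TP=PTP$ and $PT^{-1}(I-P)=0$. The bounds $r(T|_M)<1$ and $r(T^{-1}|_N)<1$ then follow from Cauchy estimates on $C_{-n}=T^{n-1}P$ and $C_{n}=-T^{-n-1}(I-P)$ (Theorem~\ref{thm:C}). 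Your ``holomorphic extension'' definition of $M$ and your ``correction'' of $R$ would have to reproduce exactly this constant-range property, and nothing in Steps~2--4 supplies it.
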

It remains open whether the separability assumption in Theorem~\ref{thm:B} can be removed.

The proofs of Theorems~\ref{thm:A} and~\ref{thm:B} are given in Section~3.
The main structural tool is a new characterization of generalized hyperbolicity in terms of the existence of a right resolvent function on an open annulus containing the unit circle. 
Before stating this characterization, we recall the notion of a right resolvent function from~\cite[Sec.~9.2]{A}.
 
Let~$X$ be a complex Banach space, let $T\in L(X)$, and let $\Omega\subseteq\rho_r(T)$ be an open set. We say that \emph{$T$ admits a right resolvent function  on~$\Omega$} if there exists a continuous function $R\colon\Omega\rightarrow L(X)$ satisfying
\begin{equation}\label{eq:hfri}
	(\lambda I-T)R(\lambda)=I\qquad\text{for all $\lambda\in\Omega$}
\end{equation}
and the \emph{resolvent equation}
\begin{equation}\label{eq:reseq}
	R(\lambda)-R(\mu)=(\mu-\lambda)R (\lambda)R(\mu)\qquad\text{for all $\lambda,\mu\in\Omega$}.
\end{equation}
As noted in \cite[Sec.~3.1.3]{H}, every right resolvent function~$R$ is holomorphic on~$\Omega$.
Allan~\cite{All} (see also ~\cite{Iva}) has shown that, for every open and connected set $\Omega\subseteq\rho_r(T)$, there exists a holomorphic family of right inverses on~$\Omega$. By a \emph{holomorphic family of right inverses on~$\Omega$}, we mean a holomorphic function $R\colon\Omega\rightarrow L(X)$ satisfying~\eqref{eq:hfri}. In contrast to the classical resolvent function 
\begin{equation*}
R(\lambda)=(\lambda I-T)^{-1},\qquad \lambda\in\rho(T),
\end{equation*}
a holomorphic family of right inverses need not satisfy the resolvent equation~\eqref{eq:reseq}. For separable Hilbert spaces, Apostol et al.~\cite{A} established the following sufficient condition for the existence of a right resolvent function.

\begin{proposition}[{\cite[Proposition~9.17]{A}}]\label{prop:sapostol}
Let $X$ be a separable complex Hilbert space and let $T\in L(X)$. If $\Omega\subset\rho_r(T)$ is an open set such that $\overline{\Omega}\subset\rho_r(T)$, where $\overline{\Omega}$ denotes the closure of $\Omega$ in~$\mathbb C$, and 
\begin{equation*}
\operatorname{nul}(\lambda I-T):=\dim\ker(\lambda I-T)\quad\text{is constant for $\lambda\in\Omega$},
\end{equation*}
 then~$T$ admits a right resolvent function on~$\Omega$.
\end{proposition}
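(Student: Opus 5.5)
The plan is to reduce the existence of a right resolvent function to finding a \emph{common complement} to all the kernels. Write $A(\lambda)=\lambda I-T$ and $N(\lambda)=\ker A(\lambda)$ for $\lambda\in\Omega$. First I would verify the following algebraic observation: if $M\subseteq X$ is a closed subspace with $X=M\oplus N(\lambda)$ for every $\lambda\in\Omega$, then each restriction $A(\lambda)|_M\colon M\to X$ is bijective. Indeed it is injective because $M\cap N(\lambda)=\{0\}$, and surjective because $A(\lambda)$ is surjective and kills $N(\lambda)$. By the open mapping theorem it is then boundedly invertible, and we set
\[
R(\lambda)=\bigl(A(\lambda)|_M\bigr)^{-1}\in L(X),\qquad \lambda\in\Omega ,
\]
regarded as an operator into $X$. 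This gives \eqref{eq:hfri} at once. For \eqref{eq:reseq}, both sides of the resolvent equation take values in $M$, and $A(\lambda)$ is injective on $M$, so it suffices to apply $A(\lambda)$ to both sides. The left side gives $I-A(\lambda)R(\mu)=I-\bigl((\lambda-\mu)R(\mu)+I\bigr)=(\mu-\lambda)R(\mu)$ after expanding $A(\lambda)=A(\mu)+(\lambda-\mu)I$. The right side gives $(\mu-\lambda)R(\mu)$ as well. Continuity of $R$ follows from continuity of $\lambda\mapsto A(\lambda)|_M$ and of inversion in the Banach algebra of bounded maps $M\to X$. So everything reduces to producing such an $M$.

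Second, I would exploit the structure of $\lambda\mapsto N(\lambda)$. Since $\overline\Omega\subset\rho_r(T)$, on a neighbourhood $U$ of $\overline\Omega$ the operators $A(\lambda)$ are surjective. The minimal-norm right inverses $A(\lambda)^*\bigl(A(\lambda)A(\lambda)^*\bigr)^{-1}$ depend real-analytically on $\lambda$. Hence the orthogonal projections $P(\lambda)=I-A(\lambda)^*\bigl(A(\lambda)A(\lambda)^*\bigr)^{-1}A(\lambda)$ onto $N(\lambda)$ vary continuously, and $N(\lambda)$ is continuous in the gap topology. Locally the problem is easy: for $\lambda$ near $\lambda_0$, $\|P(\lambda)-P(\lambda_0)\|<1$ implies that $N(\lambda_0)^\perp$ is a complement of $N(\lambda)$. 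The hypothesis that $\operatorname{nul}A(\lambda)$ is constant on $\Omega$ is what lets these local pieces fit into a single holomorphic subbundle $\{N(\lambda)\}$ of $\Omega\times X$ of constant rank. By the triviality of holomorphic Hilbert bundles over planar domains (Shubin / Grauert--Bungart type results, available in the separable setting), it admits a global holomorphic frame. That is, there is a separable Hilbert space $E$ and a holomorphic family of isomorphisms $J(\lambda)\colon E\to N(\lambda)$.

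Third, and this is the step I expect to be the main obstacle, I would glue the local complements into one global $M$. My first attempt: choose a separable Hilbert space decomposition $X=X_0\oplus X_1$ with $\dim X_1=\dim E$ (possible because $X$ is separable and, when the nullity is infinite, $X$ can be split off an infinite-dimensional summand). Then look for $M$ of the form $M=\{x+Kx : x\in X_0\}$, the graph of a bounded $K\colon X_0\to X_1$, chosen so that the compressions $Q_1J(\lambda)$ of the frame become uniformly invertible. Here the compactness of $\overline\Omega$ inside $\rho_r(T)$ is essential: it gives uniform bounds on $\|A(\lambda)\|$, on $\|(A(\lambda)A(\lambda)^*)^{-1}\|$ and on the frame. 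A finite cover of $\overline\Omega$ by gap-small discs then reduces the gluing to finitely many perturbation steps. If this direct construction resists, the fallback is to pass to $X\oplus\ell^2$ and use the extra room to absorb the kernels, then compress back. Either way, once a closed common complement $M$ is in hand, the first step completes the proof.
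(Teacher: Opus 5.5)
The paper gives no proof of this statement; it is imported from \cite[Proposition~9.17]{A}. Your proposal has a genuine gap: the key step is not carried out, and the route you outline for it cannot work.

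Your first step is correct, and it is in fact an equivalence. The resolvent equation gives $R(\mu)=R(\lambda)\bigl(I-(\mu-\lambda)R(\mu)\bigr)$, so $\operatorname{ran}R(\lambda)$ is independent of $\lambda$ and is a closed common complement of all the kernels. Hence this step isolates the difficulty rather than reducing it. The entire content of the proposition is the existence of a closed $M$ with $X=M\oplus\ker(\lambda I-T)$ for every $\lambda\in\Omega$. Your third step, which should produce $M$, is only sketched (``first attempt'', ``if this direct construction resists, the fallback is\dots''). The finite-cover idea does not glue by itself. Whether the graph of $K$ complements $\ker(\lambda I-T)$ is stable under small changes of $K$, but achieving it on a new disc may require a large change of $K$ that destroys it on the discs already treated. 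Two smaller points. Constancy of the nullity is not what makes the kernels a holomorphic subbundle, since they form one on every component of $\rho_r(T)$. Its role is that it is necessary, because $X=M\oplus\ker(\lambda I-T)$ forces $\ker(\lambda I-T)\cong X/M$. Also, you tacitly assume $\overline\Omega$ is compact. This is harmless, since for unbounded $\Omega$ the nullity is $0$ and $(\lambda I-T)^{-1}$ works.

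More fundamentally, the planned route uses only bundle data: a holomorphic frame $J(\lambda)$, uniform bounds on $\overline\Omega$, and separability. These do not imply that a constant complement exists. Take unit vectors $u_1,u_2\in\ell^2$ and set $h(\lambda)=\lambda^2u_1+(\lambda-1)^2u_2$. This is a nowhere-vanishing holomorphic frame of a trivial line subbundle. A complement of $\mathbb C h(\lambda)$ must be a closed hyperplane $\ker\phi$. Then $\phi(h(\lambda))=a\lambda^2+b(\lambda-1)^2$ always has a zero in $|\lambda|\le1$: at $0$ if $b=0$, at $\tfrac12$ if $a+b=0$, and otherwise because the reciprocals of its two roots sum to $2$. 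So there is no constant complement over any neighbourhood of the closed unit disc, and adding an extra summand $\ell^2$ does not change this.

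What rules this out for kernels of $\lambda I-T$ is their eigenvector structure. In the example, $h(0),h(\tfrac12),h(1)$ are linearly dependent, which is impossible for eigenvectors with distinct eigenvalues. A proof has to use this structure. For instance, if $y\perp\ker(\mu I-T)$ then $y=(\bar\mu I-T^*)z$, and $TJ(\nu)=\nu J(\nu)$ gives $\langle J(\nu)\xi,y\rangle=(\mu-\nu)\langle J(\nu)\xi,z\rangle$. For finite nullity $n$, this division property settles the matter. Choose $y_1,\dots,y_n$ with $\det[\langle J(\nu)\xi_i,y_j\rangle]\not\equiv0$ on each of the finitely many components of $\rho_r(T)$ meeting $\overline\Omega$. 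Then remove its finitely many zeros on $\overline\Omega$ one at a time by division, after which $M=\{y_1,\dots,y_n\}^\perp$ works. Infinite nullity needs a further, genuinely infinite-dimensional argument, which your proposal does not supply either.
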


For $\epsilon\in(0,1)$, set
$$
\mathbb T_\epsilon
=\{\,\lambda\in\mathbb C\mid 1-\epsilon<|\lambda|<1+\epsilon\,\},
$$
the open annulus around the unit circle with inner radius $1-\epsilon$ and outer radius
$1+\epsilon$.

We can now state the following new characterization of generalized hyperbolicity.

\begin{maintheorem}\label{thm:C}

Let~$X$ be a complex Banach space and let $T\in GL(X)$.
Then $T$ is generalized hyperbolic if and only if there exists $\epsilon\in(0,1)$ such that 
$\mathbb T_\epsilon\subseteq\rho_r(T)$ and $T$ admits a right resolvent function on~$\mathbb T_\epsilon$.
\end{maintheorem}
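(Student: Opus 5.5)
For the forward implication, the plan is to write down an explicit right resolvent function on a thin annulus. For the converse, the plan is to build a Riesz-type projection from a given right resolvent function and read off the decomposition from the Laurent coefficients of $R$.

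\emph{Generalized hyperbolic $\Rightarrow$ right resolvent function.} Let $X=M\oplus N$ be as in the definition, with complementary projections $P$ (onto $M$) and $Q=I-P$ (onto $N$). Put $S=T^{-1}|_N\in L(N)$, so $r(S)<1$. Choose $\epsilon\in(0,1)$ with $r(T|_M)<1-\epsilon$ and $r(S)<(1+\epsilon)^{-1}$. For $\lambda\in\mathbb T_\epsilon$ define
\[
R(\lambda)=(\lambda I_M-T|_M)^{-1}P-S(I_N-\lambda S)^{-1}Q=(\lambda I_M-T|_M)^{-1}P-\sum_{k\ge0}\lambda^kS^{k+1}Q .
\]
The first term is the genuine resolvent of $T|_M$, so $(\lambda I-T)$ applied to it gives $P$. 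For the second term, a telescoping computation using $TS^{k+1}=S^k$ on $N$ gives $(\lambda I-T)\bigl(-\sum_{k\ge0}\lambda^kS^{k+1}Qx\bigr)=Qx$. Hence $(\lambda I-T)R(\lambda)=I$, and $R$ is continuous (indeed holomorphic) on $\mathbb T_\epsilon$.

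For the resolvent equation, note that $R(\mu)$ maps $M$ into $M$ and $N$ into $N$, so $R(\lambda)R(\mu)$ splits into the $M$-part and the $N$-part with no cross terms. On $M$ the equation is the classical one. On $N$, all the operators involved are functions of $S$ and commute, and a direct computation gives
\[
-S\bigl[(I-\lambda S)^{-1}-(I-\mu S)^{-1}\bigr]=(\mu-\lambda)S^2(I-\lambda S)^{-1}(I-\mu S)^{-1},
\]
which is exactly the resolvent equation there. Finally, $\mathbb T_\epsilon\subseteq\rho_r(T)$ holds automatically, since $R(\lambda)$ is a right inverse of $\lambda I-T$.

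\emph{Right resolvent function $\Rightarrow$ generalized hyperbolic.} Since $R$ is holomorphic on $\mathbb T_\epsilon$, expand it in a Laurent series $R(\lambda)=\sum_{n\in\mathbb Z}A_n\lambda^{-n-1}$, with $A_n=\frac1{2\pi i}\oint_{|\lambda|=\rho}\lambda^nR(\lambda)\,d\lambda$ for any $\rho\in(1-\epsilon,1+\epsilon)$. Comparing coefficients in $(\lambda I-T)R(\lambda)=I$ gives
\[
A_0=I+TA_{-1},\qquad A_{n+1}=TA_n\quad (n\neq-1).
\]
Let $P=A_0=\frac1{2\pi i}\oint_{|\lambda|=1}R(\lambda)\,d\lambda$.

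The key step, and the main obstacle, is to show that $P^2=P$ and that $P$ commutes with every $A_n$. From the resolvent equation, $R(\lambda)R(\mu)=R(\mu)R(\lambda)$, so $P$ and all the $A_n$ commute with each other. For idempotence, I would imitate the classical Riesz projection argument. Write $P^2$ as a double integral over two circles $|\lambda|=\rho_1<|\mu|=\rho_2$ inside $\mathbb T_\epsilon$ and substitute
\[
R(\lambda)R(\mu)=\frac{R(\lambda)-R(\mu)}{\mu-\lambda}.
\]
The Cauchy integral formula then reduces the double integral to $P$. Only the resolvent equation and holomorphy on the annulus are used here, never two-sided invertibility. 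This is precisely the point where a mere holomorphic family of right inverses would not suffice.

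Now set $M=\operatorname{ran}P$ and $N=\ker P$. These are closed complementary subspaces, since $P$ is a bounded projection.

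\emph{Invariance and decay on $M$.} From $A_{n+1}=TA_n$ for $n\ge0$ we get $A_n=T^nP$. Since $P$ commutes with $A_1$, for $x\in M$ we have $Tx=A_1x=PA_1x\in M$, so $T(M)\subseteq M$. Moreover $(T|_M)^n=A_n|_M$. Estimating $A_n$ on a circle of radius $\rho<1$ with $\rho>1-\epsilon$ gives $\|A_n\|\le C\rho^{n+1}$, hence $r(T|_M)\le\rho<1$.

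\emph{Invariance and decay on $N$.} Put $B_k=-A_{-k-1}$ for $k\ge0$. The relation $A_0=I+TA_{-1}$ gives $B_0=T^{-1}(I-P)$ after multiplying by $T^{-1}$; one should take care here with the indexing (it may be more natural to work with $TB_0=I-P$). The relation $A_{n+1}=TA_n$ for $n\le-2$ gives $B_k=T^{-1}B_{k-1}$. Hence $T^{-k}(I-P)$ is a fixed shift of the $B_j$. Using that $P$ commutes with the $B_k$, for $x\in N$ we get $T^{-1}x=T^{-1}(I-P)x\in N$, so $T^{-1}(N)\subseteq N$. The norms $\|B_k\|$ are estimated on a circle of radius $\rho'\in(1,1+\epsilon)$ by $C\rho'^{-k}$, so $r(T^{-1}|_N)\le1/\rho'<1$.

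This yields the required decomposition $X=M\oplus N$, so $T$ is generalized hyperbolic.
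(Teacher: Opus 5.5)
Your proof is correct. It has the same architecture as the paper's: the same explicit $R(\lambda)=(\lambda I_M-T|_M)^{-1}P-S(I_N-\lambda S)^{-1}Q$ in the forward direction, and the same Laurent coefficients and projection $P=\frac{1}{2\pi i}\oint R(\lambda)\,d\lambda$ in the converse (your $A_n$ is the paper's $C_{-n-1}$). Two steps are verified differently.

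\emph{Resolvent equation in the forward direction.} The paper shows that every $R(\lambda)$ has the same range $E=M+T^{-1}(N)$. Hence the idempotent $\Pi(\lambda)=R(\lambda)(\lambda I-T)$ fixes $\operatorname{ran}R(\mu)$, and the identity $R(\lambda)(\lambda I-T)R(\mu)=R(\mu)$ rearranges into the resolvent equation. You instead note that $R(\mu)$ is block-diagonal with respect to $M\oplus N$ and check the identity block by block. Your check is more direct. The paper's route isolates a reusable principle: right inverses with a common range automatically satisfy the resolvent equation.

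\emph{Idempotence and invariance in the converse.} The paper computes $PR(\mu)=\sum_{n\ge1}\mu^{-n}C_{-n}$ and reads off $PC_{-n}=C_{-n}$ and $PC_n=0$. These give idempotence and both invariance relations at once. You instead get $PA_n=A_nP$ from the commutativity $R(\lambda)R(\mu)=R(\mu)R(\lambda)$, which the resolvent equation forces. You get idempotence from the two-circle Riesz argument, which amounts to the paper's computation of $PR(\mu)$ integrated once more over $|\mu|=\rho_2$.

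Two minor points:
\begin{enumerate}
\item A right resolvent function is only assumed continuous, so you should justify holomorphy before taking the Laurent expansion. The paper does this via $R'(\lambda)=-R(\lambda)^2$, which follows from the resolvent equation.
\item Your hesitation about the indexing on $N$ is unnecessary. In fact $B_0=T^{-1}(I-P)$ exactly, and $(T^{-1}|_N)^{k+1}=B_k|_N$.
\end{enumerate}
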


The proof of Theorem~C is given in Section~2. Theorem~C yields the following corollary.

\begin{corollary}\label{cor:ghrh}
Let~$X$ be a complex Banach space and let $T\in GL(X)$.
If $T$ is generalized hyperbolic, then $\sigma_r(T)\cap\mathbb T=\emptyset$. 
	
\end{corollary}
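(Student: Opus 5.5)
The corollary is the ``only if'' half of Theorem~\ref{thm:C}. Take $T\in GL(X)$ generalized hyperbolic. Theorem~\ref{thm:C} gives $\epsilon\in(0,1)$ with $\mathbb T_\epsilon\subseteq\rho_r(T)$. Since $\mathbb T\subset\mathbb T_\epsilon$, we get $\mathbb T\subseteq\rho_r(T)$, that is, $\sigma_r(T)\cap\mathbb T=\emptyset$. The existence of the right resolvent function is not even needed here; only the inclusion $\mathbb T_\epsilon\subseteq\rho_r(T)$ is used.

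If one wants the argument independent of the full proof of Theorem~\ref{thm:C}, I would construct right inverses of $\lambda I-T$ directly for $\lambda$ near $\mathbb T$. Let $X=M\oplus N$ be the generalized hyperbolic splitting, with projection $P$ onto $M$ along $N$. Set $A=T|_M$ and $B=T^{-1}|_N$, so that $r(A)<1$ and $r(B)<1$. Fix $|\lambda|$ close to $1$ and $x\in X$, and write $x=Px+(I-P)x$.

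\begin{itemize}
\item For the $M$-component, put $R_M(\lambda)=(\lambda I_M-A)^{-1}P$, defined by a Neumann series on $M$ because $|\lambda|>r(A)$. Since $T$ agrees with $A$ on $M$, we get $(\lambda I-T)R_M(\lambda)=P$.
\item For the $N$-component, note that on $N$ we have $\lambda I-T=-T(I-\lambda T^{-1})$ and $T^{-1}$ maps $N$ into $N$. So we want $y\in N$ with $(\lambda I-T)y=(I-P)x$. Put
\[
y=-(I_N-\lambda B)^{-1}B(I-P)x=-\sum_{k\ge0}\lambda^kB^{k+1}(I-P)x,
\]
which converges because $|\lambda|\,r(B)<1$. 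Then $(\lambda I-T)y=-T(I-\lambda T^{-1})y=T\,T^{-1}(I-P)x=(I-P)x$, as required.
\end{itemize}

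Adding the two pieces gives a bounded right inverse of $\lambda I-T$. This works on the annulus $r(A)<|\lambda|<1/r(B)$, which contains $\mathbb T$.

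The only delicate point is the $N$-part. Since $T(N)\not\subseteq N$ in general, one has to use the factorization through $T^{-1}|_N$ rather than inverting $\lambda I-T$ on $N$ directly. Once that is done, the conclusion is immediate.
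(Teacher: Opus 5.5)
Your proposal is correct and follows the paper: the corollary is read off from the necessity half of Theorem~C. Your self-contained construction $R(\lambda)=(\lambda I-A)^{-1}P-B(I-\lambda B)^{-1}(I-P)$ is exactly the operator the paper builds in that proof to get $\mathbb T_\epsilon\subseteq\rho_r(T)$, and you are right that the resolvent equation is not needed for this corollary.
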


Corollary~\ref{cor:ghrh}, together with the inclusion $\sigma_s(T)\subseteq \sigma_r(T)$
and Theorem~\ref{thm:specban}, gives another proof of the fact that every generalized hyperbolic operator has the shadowing property.
More importantly, Theorem~\ref{thm:specban} and Corollary~\ref{cor:ghrh} reveal a natural idea for constructing a counterexample to the conjecture: if one can find an invertible operator~$T$ on a Banach space such that
\begin{equation*}
\sigma_s(T)\cap\mathbb T=\emptyset
\qquad\text{but}\qquad
\sigma_r(T)\cap\mathbb T\neq\emptyset,
\end{equation*}
then~$T$ has the shadowing property but is not generalized hyperbolic. The counterexample constructed in the proof of Theorem~A has precisely these properties.

This obstruction disappears on Hilbert spaces, where $\sigma_s(T)=\sigma_r(T)$. If $X$ is a Hilbert space and $T\in GL(X)$ has the shadowing property, then Corollary~\ref{cor:spechilb} gives $\sigma_r(T)\cap\mathbb T=\emptyset$. 
Since~$\rho_r(T)$ is open and contains the compact set~$\mathbb T$, there exists $\epsilon>0$ such that
$\overline{\mathbb T_\epsilon}\subset \rho_r(T)$.
Assume, in addition, that~$X$ is separable. In the proof of Theorem~\ref{thm:B}, we show that the function
$\operatorname{nul}(\lambda I-T)$
is locally constant for $\lambda\in\mathbb T_\epsilon$. Since the annulus $\mathbb T_\epsilon$ is connected, the nullity is therefore constant there. Proposition~\ref{prop:sapostol} implies that~$T$ admits a right resolvent function on~$\mathbb T_\epsilon$ and Theorem~\ref{thm:C} yields that~$T$ is generalized hyperbolic.

Thus, the different behavior on general Banach spaces and on separable Hilbert spaces  is related to the distinction between surjectivity and right invertibility. On general Banach spaces, the inclusion
$\sigma_s(T)\subseteq\sigma_r(T)$
may be strict, which makes the counterexample in Theorem~\ref{thm:A} possible. On Hilbert spaces the two spectra coincide, and in the separable case the result of Apostol et al.~\cite{A}, together with Theorem~\ref{thm:C}, yields the equivalence between the shadowing property and generalized hyperbolicity stated in Theorem~\ref{thm:B}.

The paper is organized as follows. In Section~2, we prove Theorem~\ref{thm:C}. In Section~3, using Theorem~\ref{thm:C}, we prove Theorems~\ref{thm:A} and~\ref{thm:B}.

\section{Proof of Theorem~\ref{thm:C}}

In this section, we give a proof of Theorem~C. Let $X$ be a complex Banach space. Throughout the paper, by a \emph{projection} on~$X$, we mean a bounded operator $P\in L(X)$ such that $P^2=P$. Recall that a subspace $E$ of~$X$ is \emph{complemented in~$X$} if there exists a projection $P\in L(X)$ such that $E=\operatorname{ran}P$.

\begin{proof} [Proof of Theorem~C]  \textit{Necessity.} Suppose that $T$ is generalized hyperbolic. We will show the existence of $\epsilon\in(0,1)$ such that  $\mathbb T_\epsilon\subseteq\rho_r(T)$ and $T$ admits a right resolvent function on~$\mathbb T_\epsilon$. Let $X=M\oplus N$ be the direct-sum decomposition from the definition of generalized hyperbolicity. 
Let $P$ be the projection of~$X$ onto~$M$ along $N$ and let
$
Q=I-P$.
Define
\begin{equation*}
	A=T|_M\in L(M)
\qquad\text{and}\qquad
B=T^{-1}|_N\in L(N).
\end{equation*}
Choose $\epsilon\in(0,1)$ so small that
\begin{equation}\label{eq:epschoice}
r(A)<1-\epsilon
\qquad\text{and}\qquad
r(B)<\frac{1}{1+\epsilon}.
\end{equation}
For $\lambda\in\mathbb T_\epsilon$, define
\begin{equation}\label{eq:rightresser}
R(\lambda)
=
\sum_{n=0}^{\infty}\lambda^{-n-1}T^n P
-
\sum_{n=0}^{\infty}\lambda^nT^{-n-1}Q.
\end{equation}
We will show that~$R$ is well-defined. Choose $a$ and~$b$ such that
\begin{equation}\label{eq:abchoice}
1-\epsilon<a<1,\qquad 1<b<1+\epsilon.
\end{equation}
From~\eqref{eq:epschoice} and Gelfand's spectral radius formula, it follows that there exists $C>0$ such that for all $n\geq0$,
$$
\|A^n\|\leq C(1-\epsilon)^n,\qquad\|B^n\|\leq C\frac{1}{(1+\epsilon)^n}.
$$
Since $T(M)\subseteq M$, we have $T^n P=A^n P$ for $n\geq0$. Similarly, $T^{-1}(N)\subseteq N$ implies $T^{-n}Q=B^nQ$ for $n\geq0$.
From this, we find for $a\leq|\lambda|\leq b$,
$$
\sum_{n=0}^\infty\|\lambda^{-n-1}T^n P\|=\sum_{n=0}^\infty|\lambda|^{-n-1}\|A^n P\|\leq\sum_{n=0}^\infty a^{-n-1}C(1-\epsilon)^n\|P\|
=\frac{C\|P\|}{a-(1-\epsilon)}
$$
and
$$
\sum_{n=0}^\infty\|\lambda^{n}T^{-n-1} Q\|=\sum_{n=0}^\infty|\lambda|^{n}\|B^{n+1} Q\|\leq\sum_{n=0}^\infty b^{n}C\frac{1}{(1+\epsilon)^{n+1}}\|Q\|
=\frac{C\|Q\|}{(1+\epsilon)-b}.
$$
Since the last estimates in both strings of inequalities are independent of~$\lambda$, by the Weierstrass $M$-test, both operator-valued series converge uniformly on every compact subannulus $a\leq|\lambda|\leq b$ of~$\mathbb T_\epsilon$. Consequently, their sums are continuous on~$\mathbb T_\epsilon$. This proves that $R\colon\mathbb T_\epsilon\rightarrow L(X)$ is a continuous function.

For $\lambda\in\mathbb T_\epsilon$, define
$$
R_M(\lambda)=\sum_{n=0}^{\infty}\lambda^{-n-1}T^nP,
\qquad
R_N(\lambda)=-\sum_{n=0}^{\infty}\lambda^nT^{-n-1}Q,
$$
so that $R=R_M+R_N$ on $\mathbb T_\epsilon$. For every $\lambda\in\mathbb T_\epsilon$, we have
\begin{align*}
(\lambda I-T)R_M(\lambda)
&=
\sum_{n=0}^{\infty}\lambda^{-n}T^nP
-
\sum_{n=0}^{\infty}\lambda^{-n-1}T^{n+1}P
=P,
\end{align*}
and
\begin{align*}
(\lambda I-T)R_N(\lambda)
&=
-\sum_{n=0}^{\infty}\lambda^{n+1}T^{-n-1}Q
+
\sum_{n=0}^{\infty}\lambda^nT^{-n}Q
=Q.
\end{align*}
Consequently,
\begin{equation}\label{eq:rightinveq}
(\lambda I-T)R(\lambda)=P+Q=I,\qquad \lambda\in\mathbb T_\epsilon.
\end{equation}
Thus, $\mathbb T_\epsilon\subseteq\rho_r(T)$ and the right-inverse identity~\eqref{eq:hfri} holds for $\Omega=\mathbb T_\epsilon$.

It remains to prove the resolvent equation~\eqref{eq:reseq}.
It is known (see \cite[Theorem~5.2--C, p.~262]{T}) that
\begin{equation*}
	(\lambda I-A)^{-1}=\sum_{n=0}^\infty\lambda^{-n-1}A^n\qquad\text{whenever $|\lambda|>r(A)$}.
\end{equation*}
By the same formula,
\begin{equation*}
	(I-\lambda B)^{-1}=\sum_{n=0}^\infty\lambda^{n}B^n\qquad\text{whenever $1>r(\lambda B)=|\lambda|r(B)$}.
\end{equation*}
From this and~\eqref{eq:rightresser}, using $T^n P=A^n P$ and $T^{-n}Q=B^n Q$ for $n\geq0$ again, we find that
\begin{equation*}
R(\lambda)=(\lambda I-A)^{-1}P-B(I-\lambda B)^{-1} Q	\qquad\text{whenever $r(A)<|\lambda|$ and $|\lambda|r(B)<1$}.
\end{equation*}
We claim that
\begin{equation}\label{eq:commonrange}
	\operatorname{ran} R(\lambda)=M+T^{-1}(N)\qquad\text{for all $\lambda\in\mathbb T_\epsilon$}.
\end{equation}
Let $\lambda\in\mathbb T_\epsilon$. Every $x\in X$ has a unique decomposition $x=m+n$, $m\in M$, $n\in N$. Then $Px=m$, $Qx=n$, and hence
\begin{equation*}
	R(\lambda)x=(\lambda I-A)^{-1}m-B(I-\lambda B)^{-1}n\in M+B(N)=M+T^{-1}(N).
\end{equation*}
Conversely, let $u\in M$ and $v\in T^{-1}(N)$. Since $(\lambda I-A)^{-1}\colon M\to M$, $(I-\lambda B)^{-1}\colon N\to N$ are surjective, we have
\begin{equation*}
	\operatorname{ran}((\lambda I-A)^{-1})=M\qquad\text{and}\qquad \ran(B(I-\lambda B)^{-1})=B(N)=T^{-1}(N). 
\end{equation*}
Thus, there exist $m\in M$ and $n\in N$ such that $(\lambda I-A)^{-1}m=u$ and $-B(I-\lambda B)^{-1}n=v$. Let $x=m+n$. Then $Px=m$, $Qx=n$ and hence
\begin{equation*}
	R(\lambda)x=(\lambda I-A)^{-1}m-B(I-\lambda B)^{-1}n=u+v.
\end{equation*}
Thus, \eqref{eq:commonrange} holds.

Define 
\begin{equation}\label{eq:defE}
E=M+T^{-1}(N)
\end{equation}
 and
\begin{equation*}
	\Pi(\lambda)=R(\lambda)(\lambda I-T),\qquad \lambda\in\mathbb T_\epsilon.
\end{equation*}
Since $\lambda I-T$ is surjective for $\lambda\in\mathbb T_\epsilon$, the last relation, \eqref{eq:commonrange} and~\eqref{eq:defE}, yield
\begin{equation}\label{eq:samerange}
	\operatorname{ran}\Pi(\lambda)=\operatorname{ran}R(\lambda)=E,\qquad\lambda\in\mathbb T_\epsilon.
\end{equation}
Using~\eqref{eq:rightinveq}, we have for $\lambda\in\mathbb T_\epsilon$,
\begin{equation*}
\Pi^2(\lambda)=	\Pi(\lambda)\Pi(\lambda)=R(\lambda)[(\lambda I-T)R(\lambda)](\lambda I-T)=R(\lambda)(\lambda I-T)=\Pi(\lambda).
\end{equation*}
Thus, $\Pi(\lambda)$ is a projection for $\lambda\in\mathbb T_\epsilon$. Since $\Pi(\lambda)$ is a  projection, its range $E=\operatorname{ran}\Pi(\lambda)$ (see \eqref{eq:samerange}) is a closed subspace of~$X$ and $\Pi(\lambda)|_E=I_E$ for $\lambda\in\mathbb T_\epsilon$. This, together with $\operatorname{ran}R(\mu)=E$ for $\mu\in\mathbb T_\epsilon$ (see ~\eqref{eq:commonrange} and \eqref{eq:defE}), implies 
\begin{equation*}
	\Pi(\lambda)R(\mu)=R(\mu)\qquad\text{whenever $\lambda$, $\mu\in\mathbb T_\epsilon$}.
\end{equation*}
Hence
\begin{equation}\label{eq:lamuid}
	R(\lambda)(\lambda I-T)R(\mu)=\Pi(\lambda)R(\mu)=R(\mu)\qquad\text{whenever $\lambda$, $\mu\in\mathbb T_\epsilon$}.
\end{equation}
By~\eqref{eq:rightinveq}, we have for $\lambda$, $\mu\in\mathbb T_\epsilon$,
\begin{equation*}
	R(\lambda)(\mu I-T)R(\mu)=R(\lambda)I=R(\lambda).
\end{equation*}
Subtracting~\eqref{eq:lamuid} from the last equation, we obtain for $\lambda$, $\mu\in\mathbb T_\epsilon$,
\begin{align*}
R(\lambda)-R(\mu)&=R(\lambda)(\mu I-T)R(\mu)-R(\lambda)(\lambda I-T)R(\mu)\\
&=R(\lambda)[\,(\mu I-T)-	(\lambda I-T)\,]R(\mu)\\
&=R(\lambda)(\mu-\lambda)R(\mu).
\end{align*}
Hence
\begin{equation}
	R(\lambda)-R(\mu)=(\mu-\lambda)R(\lambda)R(\mu)\qquad\text{for all $\lambda$, $\mu\in\mathbb T_\epsilon$ };
\end{equation}
that is, the resolvent equation~\eqref{eq:reseq} on $\Omega=\mathbb T_\epsilon$ also holds.

\textit{Sufficiency.} Suppose that there exists $\epsilon\in(0,1)$ such that 
$\mathbb T_\epsilon\subseteq\rho_r(T)$ and $T$ admits a right resolvent function~$R$ on~$\mathbb T_\epsilon$. We will show that~$T$ is generalized hyperbolic. The resolvent equation~\eqref{eq:reseq}, together with the continuity of~$R$, implies for $\lambda\in\mathbb T_\epsilon$,
\begin{equation}
	R'(\lambda)=\lim_{h\to0}\frac{R(\lambda+h)-R(\lambda)}{h}=\lim_{h\to0}\bigl(-R(\lambda+h)R(\lambda)\bigr)=-R^2(\lambda).
\end{equation}
Thus, $R$ is holomorphic on the open annulus~$\mathbb T_\epsilon$. Therefore, $R$ can be expanded into a Laurent series
\begin{equation}\label{eq:laurexp}
R(\lambda)=\sum_{n=-\infty}^\infty C_n\lambda^n,
\quad 1-\epsilon<|\lambda|<1+\epsilon,
\end{equation}
where the unique coefficients $C_n\in L(X)$, $n\in\mathbb Z$, can be expressed in the form of a contour integral
\begin{equation}\label{eq:intrep}
C_n=\frac{1}{2\pi i}\int_{|\lambda|=\rho}\lambda^{-n-1}R(\lambda)\,d\lambda,\qquad n\in\mathbb Z,
\end{equation}
where $\rho\in(1-\epsilon,1+\epsilon)$ is arbitrary and the circle $|\lambda|=\rho$ is positively oriented~\cite{Hil}. Hence
\begin{equation}\label{eq:coeffest}
	\|C_n\|\leq K(\rho)\rho^{-n}\qquad\text{whenever $\rho\in(1-\epsilon,1+\epsilon)$ and $n\in\mathbb Z$}, 
\end{equation}
where
\begin{equation*}
	K(\rho)=\max_{|\lambda|=\rho}\|R(\lambda)\|,\qquad \rho\in(1-\epsilon,1+\epsilon).
	\end{equation*}
	 Substituting~\eqref{eq:laurexp} into~\eqref{eq:hfri}, we obtain
\begin{equation*}
\sum_{n=-\infty}^\infty(C_{n-1}-TC_n)\lambda^n=I\qquad\text{whenever $1-\epsilon<|\lambda|<1+\epsilon$}.
\end{equation*}
From this, in view of the uniqueness of the coefficients of the Laurent series, we have 
\begin{equation*}
C_{-1}-TC_0=I
\end{equation*}
and 
\begin{equation*}
C_{n-1}-TC_n=0\qquad\text{for $n\neq0$}.
\end{equation*}
Hence
\begin{gather}
	C_0=-T^{-1}(I-C_{-1}),\label{eq:crel1}\\
		C_n=T^{-n}C_0=-T^{-n-1}(I-C_{-1}),\qquad n=1,2,\dots\label{eq:crel2}
\end{gather}
and
\begin{equation}\label{eq:crel3}
C_{-n}=T^{n-1} C_{-1},\qquad n=1,2,\dots
\end{equation}
Let $\mu\in\mathbb T_\epsilon$ be fixed. Choose $\rho\in(1-\epsilon,|\mu|)$. Define $P=C_{-1}$. From \eqref{eq:reseq} and~\eqref{eq:intrep} with $n=-1$, we obtain
\begin{align*}
PR(\mu)&=\frac{1}{2\pi i}\int_{|\lambda|=\rho} R(\lambda)R(\mu)\,d\lambda=\frac{1}{2\pi i}\int_{|\lambda|=\rho}\frac{ R(\lambda)-R(\mu)}{\mu-\lambda}\,d\lambda\\
&=\frac{1}{2\pi i}\int_{|\lambda|=\rho}\frac{ R(\lambda)}{\mu-\lambda}\,d\lambda-\frac{R(\mu)}{2\pi i}\int_{|\lambda|=\rho}\frac{ 1}{\mu-\lambda}\,d\lambda.
\end{align*}
By Cauchy's theorem, the second integral is zero because $\rho<|\mu|$ implies that the only singularity of the integrand $\lambda=\mu$ lies outside the disc $|\lambda|\leq\rho$. For the calculation of the first integral, note that, on the circle $|\lambda|=\rho$, we have
$$
\biggl|\frac{\lambda}{\mu}\biggr|=\frac{\rho}{|\mu|}<1.
$$
Hence
$$
\frac{1}{\mu-\lambda}=\frac{1}{\mu\bigl(1-\lambda/\mu\bigr)}
=\frac{1}{\mu}\sum_{k=0}^\infty\biggl(\frac{\lambda}{\mu}\biggr)^k=\sum_{k=0}^\infty\frac{\lambda^k}{\mu^{k+1}}
$$
whenever $|\lambda|=\rho$ and the convergence of the last series is uniform on the contour $|\lambda|=\rho$. Consequently,
\begin{align*}
	PR(\mu)=\frac{1}{2\pi i}\int_{|\lambda|=\rho}R(\lambda)\sum_{k=0}^\infty\frac{\lambda^k}{\mu^{k+1}}\,d\lambda
	=\frac{1}{2\pi i}\sum_{k=0}^\infty\mu^{-k-1}\int_{|\lambda|=\rho}\lambda^k R(\lambda)\,d\lambda.
\end{align*}
From this, using~\eqref{eq:intrep}, we have
\begin{equation*}
PR(\mu)=\sum_{k=0}^\infty\mu^{-k-1}C_{-k-1}=	\sum_{n=1}^\infty\mu^{-n}C_{-n},\qquad \mu\in\mathbb T_\epsilon.
\end{equation*}
On the other hand,~\eqref{eq:laurexp} yields
\begin{equation*}
PR(\mu)=\sum_{n=-\infty}^\infty PC_n\mu^n,
\quad \mu\in\mathbb T_\epsilon.
\end{equation*}
In view of the uniqueness of the Laurent coefficients, we have
\begin{align}
PC_{-n}&=C_{-n},\qquad n\geq1,\label{eq:first comp}\\
PC_n&=0,\qquad n\geq0.\label{eq:seconcomp}
\end{align}
Since $C_{-1}=P$, relations~\eqref{eq:crel1}--\eqref{eq:seconcomp} imply 
\begin{align}
P^2&=P,\label{eq:prop1}\\
TP&=PTP,\label{eq:prop2}\\
T^{-1}Q&=QT^{-1}Q.\label{eq:prop3}
\end{align}
where $Q=I-P$. Indeed, we have
\begin{gather*}
	P^2=PC_{-1}\overset{\text{\eqref{eq:first comp}}}{=}C_{-1}=P,\\
	TP=TC_{-1}\overset{\text{\eqref{eq:crel3}}}{=}C_{-2}\overset{\text{\eqref{eq:first comp}}}{=}PC_{-2}
	\overset{\text{\eqref{eq:crel3}}}{=}PTC_{-1}=PTP,\\
	PT^{-1}(I-P)=PT^{-1}(I-C_{-1})\overset{\text{\eqref{eq:crel1}}}{=}-PC_0\overset{\text{\eqref{eq:seconcomp}}}{=}0,
\end{gather*}
the last equality yielding
\begin{equation*}
QT^{-1}Q=(I-P)T^{-1}(I-P)=T^{-1}(I-P)-PT^{-1}(I-P)=T^{-1}(I-P)=T^{-1}Q.
\end{equation*}
Thus, \eqref{eq:prop1}--\eqref{eq:prop3} hold. By~\eqref{eq:prop1}, $P$ and hence $Q=I-P$ is a projection. Their ranges, 
\begin{equation*}
M=\operatorname{ran}P\qquad\text{and}\qquad N=\operatorname{ran}Q,	
\end{equation*}
are closed subspaces of~$X$ such that $X=M\oplus N$. It is easily seen that~\eqref{eq:prop2} and~\eqref{eq:prop3} are equivalent to
\begin{equation*}
	T(M)\subseteq M\qquad\text{and}\qquad T^{-1}(N)\subseteq N.
\end{equation*}
It remains to show that the spectral radii of the restrictions
\begin{equation*}
	A=T|_M\in L(M)
\qquad\text{and}\qquad
B=T^{-1}|_N\in L(N)
\end{equation*}
are less than one. By~\eqref{eq:coeffest} and~\eqref{eq:crel3}, we have for $x\in M$ and $n\geq0$,
\begin{align*}
\|A^n x\|&=\|T^n Px\|=\|T^n C_{-1}x\|=\|C_{-(n+1)}x\|\\
&\leq\|C_{-(n+1)}\|\|x\|\leq K(\rho)\rho^{n+1}\|x\|,
\end{align*}
which implies
\begin{equation*}
\|A^n\|=\sup_{0\neq x\in M}\frac{\|A^n x\|}{\|x\|}\leq\rho K(\rho)\rho^{n},
\end{equation*}
where $\rho\in(1-\epsilon,1+\epsilon)$ is arbitrary. From this, by the spectral radius formula,
\begin{equation*}
r(T|_M)=r(A)=\lim_{n\to\infty}\root{n}\of{\|A^n\|}\leq \rho.
\end{equation*}
Letting $\rho\to1-\epsilon$ in the last inequality, we conclude that
\begin{equation*}
r(T|_M)\leq1-\epsilon<1.
\end{equation*}
From~\eqref{eq:coeffest} and~\eqref{eq:crel2}, we obtain in a similar manner for $x\in N$ and $n\geq2$,
\begin{align*}
\|B^n x\|&=\|T^{-n} Qx\|=\|T^{-n}(I-P)x\|=\|T^{-n}(I-C_{-1})x\|\\
&=\|-C_{n-1}x\|\leq\|C_{n-1}\|\|x\|\leq K(\rho)\rho^{-(n-1)}\|x\|,
\end{align*}
which yields
\begin{equation*}
\|B^n\|=\sup_{0\neq x\in N}\frac{\|B^n x\|}{\|x\|}\leq\rho K(\rho)\rho^{-n},
\end{equation*}
where $\rho\in(1-\epsilon,1+\epsilon)$ is arbitrary. Hence
\begin{equation*}
r(T^{-1}|_N)=r(B)=\lim_{n\to\infty}\root{n}\of{\|B^n\|}\leq\frac{1}{\rho}.
\end{equation*}
Letting $\rho\to1+\epsilon$, we obtain
\begin{equation*}
r(T^{-1}|_N)\leq\frac{1}{1+\epsilon}<1.
\end{equation*}
Thus, $T$ is generalized hyperbolic.
\end{proof}

\section{Proofs of Theorems~\ref{thm:A} and~\ref{thm:B}}

For the reader's convenience, we summarize some known results which will be needed in the proof of Theorem~\ref{thm:A}.

 Let $\mathbb N$ be the set of positive integers. Given a complex Banach space~$X$, denote by $\ell^1(\mathbb N,X)$ and 
 $\ell^\infty(\mathbb N, X)$ the space of those sequences $(x_n)_{n\in\mathbb N}$ in~$X$ which are absolutely summable and bounded, respectively. With their standard norms,
\begin{align*}
 \|x\|_1&:=\sum_{n=1}^\infty\|x_n\|,\qquad x=(x_n)_{n\in\mathbb N}\in \ell^1(\mathbb N,X),\\
\|x\|_\infty&:=\sup_{n\in\mathbb N}\|x_n\|,\qquad x=(x_n)_{n\in\mathbb N}\in \ell^\infty(\mathbb N,X),
\end{align*}
both $\ell^1(\mathbb N,X)$ and 
 $\ell^\infty(\mathbb N, X)$ are Banach spaces. It is known that
$$
c_0(\mathbb N,X):=\bigl\{\,x=(x_n)_{n\in\mathbb N}\in \ell^\infty(\mathbb N,X):\lim_{n\to\infty}\|x_n\|=0\,\bigr\}
$$
is a closed subspace of $\ell^\infty(\mathbb N,X)$. Therefore, with the induced norm, $c_0(\mathbb N,X)$ is also a Banach space. In the special case $X=\mathbb C$, we shall write $\ell^\infty=\ell^\infty(\mathbb N,\mathbb C)$ and $c_0=c_0(\mathbb N,\mathbb C)$ for brevity.

The following classical result, due to Phillips~\cite{Ph}, will play a fundamental role in the proof of Theorem~\ref{thm:A}.

\begin{theorem}[{\cite[Theorem~3.2.20, p.~301]{M}}]\label{thm:phillips}
The closed subspace~$c_0$ is not complemented in~$\ell^\infty$.
\end{theorem}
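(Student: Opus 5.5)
The plan is to follow Whitley's short argument for Phillips' theorem, which needs only an uncountable almost disjoint family of subsets of~$\mathbb N$ and a counting argument. Suppose, for contradiction, that there is a projection $P\in L(\ell^\infty)$ with $\operatorname{ran}P=c_0$. Put $S=I-P\in L(\ell^\infty)$. Then $S$ vanishes on $c_0$. Moreover, $Sx=0$ if and only if $x\in c_0$, because $\ker S=\operatorname{ran}P=c_0$. So the goal is to produce some $x\notin c_0$ with $Sx=0$.

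\emph{Step 1: an almost disjoint family.} I will construct an uncountable family $\{A_t\}_{t\in\mathbb R}$ of infinite subsets of $\mathbb N$ such that $A_s\cap A_t$ is finite whenever $s\neq t$. To do this, fix a bijection between $\mathbb N$ and $\mathbb Q$. For each real $t$, choose a sequence of distinct rationals converging to~$t$, and let $A_t$ be the corresponding set of indices. Two sequences with different limits share only finitely many terms, so the family is almost disjoint. Write $\chi_t\in\ell^\infty$ for the indicator of $A_t$. Note that $\chi_t\notin c_0$, since $A_t$ is infinite.

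\emph{Step 2: the counting lemma (main step).} Fix a coordinate $n\in\mathbb N$ and an integer $k\geq1$. I claim that the set $F_{n,k}=\{t\mid |(S\chi_t)_n|>1/k\}$ is finite, with at most $k\|S\|$ elements. To see this, take distinct $t_1,\dots,t_m\in F_{n,k}$ and choose unimodular scalars $c_j$ so that $c_j(S\chi_{t_j})_n=|(S\chi_{t_j})_n|$. Set $x=\sum_{j=1}^m c_j\chi_{t_j}$. Since the sets $A_{t_j}$ overlap only in a finite set $F$, the vector $y=x\cdot\mathbf 1_{\mathbb N\setminus F}$ satisfies $\|y\|_\infty\leq1$, and $x-y$ is finitely supported, hence lies in $c_0$. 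Because $S$ vanishes on $c_0$, we get $Sx=Sy$. Therefore
\begin{equation*}
m/k<\sum_{j=1}^m|(S\chi_{t_j})_n|=|(Sx)_n|=|(Sy)_n|\leq\|S\|,
\end{equation*}
so $m<k\|S\|$. This is the only place where both the almost disjointness and the hypothesis $S|_{c_0}=0$ are used, and it is the heart of the proof. The delicate point is making sure the finitely many overlaps are genuinely absorbed into $c_0$.

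\emph{Step 3: conclusion.} The set $\{t\mid S\chi_t\neq0\}$ equals $\bigcup_{n,k}F_{n,k}$, which is a countable union of finite sets and hence countable. Since $\mathbb R$ is uncountable, there is some $t$ with $S\chi_t=0$. Then $\chi_t\in\ker S=c_0$, which contradicts the fact that $A_t$ is infinite. Hence no such projection $P$ exists, and $c_0$ is not complemented in $\ell^\infty$.
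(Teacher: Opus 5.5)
Your proof is correct. It is Whitley's classical argument for Phillips' theorem, and every step checks out. You take $S=I-P$ with $\ker S=c_0$ and an uncountable almost disjoint family $\{A_t\}_{t\in\mathbb R}$. The unimodular-sign trick then shows that each set $\{t : |(S\chi_t)_n|>1/k\}$ has fewer than $k\|S\|$ elements, so $S\chi_t=0$ for some $t$, and this forces $\chi_t\in c_0$, a contradiction. The delicate point is handled correctly: discarding the finite overlap set $F$ changes $x$ only by an element of $c_0\subseteq\ker S$, so $|(Sx)_n|\le\|S\|$. (The strict inequality $m/k<\cdots$ tacitly assumes $m\geq 1$, which is harmless.)

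The paper, by contrast, gives no proof. It imports the theorem from Megginson's book and uses it only in the reformulated form that $q_{c_0}$ has no bounded linear right inverse. Relying on the citation keeps the paper focused on the operator-theoretic construction; your route makes this ingredient self-contained at the cost of about a page. Your counting argument also yields that reformulated form directly. For any bounded functional $f$ on $\ell^\infty/c_0$, the same estimate applied to $f\circ q_{c_0}$ shows that $f([\chi_t])\neq 0$ for only countably many $t$. On the other hand, a bounded right inverse $\sigma$ of $q_{c_0}$ would make the countably many functionals $z\mapsto(\sigma z)_n$ separate the points of $\ell^\infty/c_0$. Since all the classes $[\chi_t]$ are nonzero, this is impossible.
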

 
Now suppose that $M$ is a closed subspace of the Banach space~$X$. 
 The \emph{quotient space $X/M$} is defined by
\begin{equation*}
X/M=\{\,[x]:x\in X\,\},\qquad\text{where $[x]:=x+M$}.
\end{equation*}
The set $X/M$, equipped with the natural vector-space operations
\begin{equation*}
[x]+[y]=[x+y], \qquad \lambda[x]=[\lambda x],\quad\qquad x, y\in X,\,\,\lambda\in\mathbb C,
\end{equation*}
and the \emph{quotient norm}
\begin{equation*}
\|[x]\|:=\inf_{m\in M}\|x-m\|,
\end{equation*}
is a Banach space (see \cite[Theorem~1.7.7, p.~53]{M}). It is known (see \cite[Proposition~1.7.12, p.~54]{M}) that the \emph{quotient map} $q_M\colon X\to X/M$, defined by
\begin{equation*}
q_M(x)=[x],\qquad x\in X,
\end{equation*}
is a bounded linear surjection satisfying
\begin{equation*}
\ker q_M=M \qquad\text{and}\qquad \|q_M\|\leq 1.
\end{equation*}

Let $E$ and $F$ be complex Banach spaces. The \emph{external direct sum} $E\oplus_1 F$ is the product vector space $$E\times F=\{\,(x,y):x\in E,\,y\in F\,\},$$ equipped with the coordinatewise vector operations and the $1$-norm 
$$
\|(x,y)\|_{E\oplus_1 F}=\|x\|_E+\|y\|_F.
$$ 
If $E$ and~$F$ are Banach spaces, then $E\oplus_1 F$ is also a Banach space (see \cite[Sec.~1.8]{M}).

Now let $U\in L(E,F)$, where $L(E,F)$ denotes the Banach space of bounded linear operators with the operator norm. We say that \emph{$U$ admits a bounded linear right inverse} if there exists $V\in L(F,E)$ such that $UV=I_F$.

From \cite[Theorem~2.12, p.~39]{B}, applied to the quotient map~$q_M$, we obtain the following useful criterion for a closed subspace~$M$ of a Banach space~$X$ to be complemented.

\begin{proposition}\label{prop:complemented}
Let $M$ be a closed subspace of a complex Banach space~$X$. Then $M$ is complemented in~$X$ if and only if the quotient map $q_M\colon X\to X/M$ admits a bounded linear right inverse.	
\end{proposition}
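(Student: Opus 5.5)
This proposition follows directly from the cited result \cite[Theorem~2.12]{B}. That result says an operator $U\in L(E,F)$ is right invertible if and only if it is surjective and $\ker U$ is complemented in~$E$. The introduction already quoted it in the special case $E=F=X$, but the proof there works verbatim for operators between two Banach spaces. I will apply it to $U=q_M\in L(X,X/M)$, using the facts recalled above that $q_M$ is a bounded linear surjection with $\ker q_M=M$. Since I am less sure the cited theorem is stated for $E\neq F$, I will also give a short self-contained argument.

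\emph{Necessity.} Suppose $M=\operatorname{ran}P$ for some projection $P\in L(X)$. Then $\ker P=\operatorname{ran}(I-P)$ is a closed complement of $M$. Define $V\colon X/M\to X$ by $V([x])=(I-P)x$. This is well defined, because if $x-y\in M$ then $(I-P)(x-y)=0$. For boundedness, take any $m\in M$; then $(I-P)x=(I-P)(x-m)$, so $\|V[x]\|\le\|I-P\|\,\|x-m\|$, and taking the infimum over $m\in M$ gives $\|V\|\le\|I-P\|$. Finally, $q_M V[x]=[x-Px]=[x]$ because $Px\in M$, so $q_MV=I_{X/M}$.

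\emph{Sufficiency.} Suppose $V\in L(X/M,X)$ satisfies $q_MV=I_{X/M}$. Set $P=I-Vq_M\in L(X)$. Since $q_MVq_M=q_M$, we get $(Vq_M)^2=Vq_M$, so $Vq_M$ and hence $P$ are projections. To identify the range of $P$: for any $x$, $q_MPx=q_Mx-q_MVq_Mx=0$, so $\operatorname{ran}P\subseteq\ker q_M=M$. Conversely, for $m\in M$ we have $q_Mm=0$, so $Pm=m$, and hence $M\subseteq\operatorname{ran}P$. Thus $M=\operatorname{ran}P$ is complemented.

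There is no real obstacle here. The only points needing care are that $V$ is well defined and bounded with respect to the quotient norm in the necessity direction, and both are handled by the infimum estimate above.
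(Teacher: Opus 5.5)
Your proposal is correct and follows the paper, which also just applies \cite[Theorem~2.12]{B} to the quotient map $q_M$; that theorem is formulated there for surjections between two possibly different Banach spaces, so your hedge about $E\neq F$ is not needed. Your self-contained argument, taking $V[x]=(I-P)x$ for necessity and $P=I-Vq_M$ for sufficiency, is also correct and proves the same fact directly.
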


Proposition~\ref{prop:complemented} implies that Phillips' theorem (Theorem~\ref{thm:phillips}) can be formulated equivalently as follows.

\begin{proposition}\label{prop:phillipsref}
The quotient map $q_{c_0}\colon\ell^\infty\rightarrow \ell^\infty/c_0$ has no bounded linear right inverse.	
\end{proposition}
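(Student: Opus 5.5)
Proposition~\ref{prop:phillipsref} is a direct reformulation of Theorem~\ref{thm:phillips} via Proposition~\ref{prop:complemented}, so the plan is short. Take $X=\ell^\infty$ and $M=c_0$ in Proposition~\ref{prop:complemented}. The hypotheses hold: $c_0$ is a closed subspace of the complex Banach space $\ell^\infty$ (this was recorded above). Proposition~\ref{prop:complemented} therefore gives that $c_0$ is complemented in $\ell^\infty$ if and only if $q_{c_0}\colon\ell^\infty\to\ell^\infty/c_0$ admits a bounded linear right inverse.

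The argument is by contradiction. Suppose there were $V\in L(\ell^\infty/c_0,\ell^\infty)$ with $q_{c_0}V=I_{\ell^\infty/c_0}$. By the ``if'' direction of Proposition~\ref{prop:complemented}, $c_0$ would then be complemented in $\ell^\infty$, which contradicts Theorem~\ref{thm:phillips}. Hence no such $V$ exists.

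For completeness, the ``if'' direction can be made explicit instead of citing it. Given $V$ with $q_{c_0}V=I$, set $P=I-Vq_{c_0}\in L(\ell^\infty)$. Then
\[
q_{c_0}P=q_{c_0}-q_{c_0}Vq_{c_0}=0,
\]
so $\operatorname{ran}P\subseteq c_0$. For $x\in c_0$ we have $q_{c_0}x=0$, hence $Px=x$. Thus $P$ is a bounded projection with range $c_0$, which is again impossible by Phillips' theorem.

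There is no genuine obstacle. The only point to check is that $\ell^\infty/c_0$ is a Banach space, so that $L(\ell^\infty/c_0,\ell^\infty)$ makes sense and the proposition applies. This holds because $c_0$ is closed, and it was recorded before the statement.
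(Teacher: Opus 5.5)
Your proposal is correct and follows the paper's route exactly: the paper obtains Proposition~\ref{prop:phillipsref} as an equivalent reformulation of Phillips' theorem (Theorem~\ref{thm:phillips}), via Proposition~\ref{prop:complemented} with $X=\ell^\infty$ and $M=c_0$. Your explicit projection $P=I-Vq_{c_0}$ is a correct, self-contained check of the one direction of Proposition~\ref{prop:complemented} that is actually needed.
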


We are in a position to give a proof of Theorem~A. In view of Theorem~\ref{thm:specban} and
Corollary~\ref{cor:ghrh}, it is sufficient to find an invertible operator~$T$ on a Banach space such that
\begin{equation*}
\sigma_s(T)\cap\mathbb T=\emptyset
\qquad\text{but}\qquad
\sigma_r(T)\cap\mathbb T\neq\emptyset.
\end{equation*}
The construction is based on the distinction between surjectivity and
right invertibility.
We start from the quotient map
$q_{c_0}:\ell^\infty\longrightarrow \ell^\infty/c_0$ which, by Proposition~\ref{prop:phillipsref}, has no bounded linear right inverse. Using~$q_{c_0}$, we construct an operator~$S$ on a suitable Banach space~$Y$
which retains this failure of right invertibility, while its small
scalar perturbations are surjective. More precisely, $S$ has no bounded linear right inverse, whereas
$S+\alpha I$ is surjective for every $\alpha\in\mathbb{C}$ with $|\alpha|<1$.
We then encode~$S$ into an invertible operator~$T$ on $X=Y\oplus_1 Y$.
The operator~$T$ is defined so that, for every
$\lambda\in\mathbb{T}$, the surjectivity of $\lambda I-T$ reduces
to the surjectivity of an operator of the form $S+\alpha_\lambda I$ with $|\alpha_\lambda|<1$. Therefore, the preceding property of~$S$ 
guarantees that $\lambda I-T$ is surjective for every $\lambda\in\mathbb{T}$, and hence
$\sigma_s(T)\cap\mathbb{T}=\emptyset$. At the same time, the structure preserves the obstruction to
right invertibility: a bounded linear right inverse of $I-T$ would yield a bounded linear right inverse of~$S$. Since no such right inverse of~$S$ exists, we obtain that $1\in\sigma_r(T)$.
Thus, the construction transfers the gap between surjectivity and right invertibility from the quotient map~$q_{c_0}$, through the auxiliary operator~$S$, to an invertible operator~$T$ whose
surjectivity spectrum avoids the unit circle while its right spectrum
meets it. Now we present the detailed construction.

\begin{proof}[Proof of Theorem~\ref{thm:A}]
Write $Q:=\ell^\infty/c_0$ for brevity and let $q_{c_0}\colon\ell^\infty\to Q$ be the quotient map from Proposition~\ref{prop:phillipsref}.
Let
\begin{equation*}	
E:=\ell^1\!\bigl(\N,\ell^\infty\bigr)
\qquad\text{and}\qquad
Y:=Q\oplus_1E.
\end{equation*}
Let $B\colon E\to E$ be the backward shift,
\[
Be=B(e_1,e_2,\ldots):=(e_2,e_3,\ldots),\qquad e=(e_1,e_2,\ldots)\in E.
\]
Define $\pi_1\colon E\to\ell^\infty$ by
\begin{equation*}
\pi_1 e=\pi_1(e_1,e_2,\ldots):=e_1,\qquad e=(e_1,e_2,\ldots)\in E.
\end{equation*}
Finally, define $S\colon Y\to Y$ by
\begin{equation*}	
S(x,e):=\bigl(q_{c_0}(\pi_1 e),Be\bigr),\qquad x\in Q,\,e\in E.
\end{equation*}
It is easily seen that $q_{c_0}$, $\pi_1$, and $B$ are bounded linear contractions\footnote{Their operator norms are not greater than one.}. This implies that~$S$ is linear and for all $x\in Q$ and $e\in E$,
\begin{align*}
	\|S(x,e)\|_Y
&=\|q_{c_0}(\pi_1 e                )\|_Q+\|Be\|_E
\leq\|q_{c_0}\|\|\pi_1\|\|e\|_E+\|B\|\|e\|_E\\
&\le 2\|e\|_E\leq 2\|x\|_Q+2\|e\|_E
=2\|(x,e)\|_Y.
\end{align*}
Thus, $S\in L(Y)$.

\begin{claim}
For every $\alpha\in\C$ with $|\alpha|<1$, the operator $S+\alpha I\in L(Y)$ is surjective.
\end{claim}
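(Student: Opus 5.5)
We prove the claim by solving the equation $(S+\alpha I)(x,e)=(y,f)$ explicitly for an arbitrary $(y,f)\in Y$. Here $y\in Q$ and $f=(f_1,f_2,\ldots)\in E$. Written out, the equation splits into two components:
\begin{equation*}
q_{c_0}(e_1)+\alpha x=y\qquad\text{and}\qquad Be+\alpha e=f .
\end{equation*}
The second equation is the coordinatewise recursion $e_{n+1}+\alpha e_n=f_n$ for $n\ge1$. For $|\alpha|<1$, the operator $B+\alpha I$ on $E=\ell^1(\N,\ell^\infty)$ is surjective with a one-parameter freedom: the first coordinate $e_1\in\ell^\infty$ can be prescribed arbitrarily. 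Our plan is to use this freedom so that the first equation holds with $x=0$. That is, we choose $e_1\in\ell^\infty$ with $q_{c_0}(e_1)=y$, which is possible because $q_{c_0}$ is surjective.

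\textbf{Key step.} Given an arbitrary $e_1\in\ell^\infty$ and $f\in E$, we define $e_n$ recursively by $e_{n+1}=f_n-\alpha e_n$. Unfolding the recursion gives
\begin{equation*}
e_{n+1}=(-\alpha)^n e_1+\sum_{k=1}^{n}(-\alpha)^{n-k}f_k .
\end{equation*}
We must check that $e=(e_n)\in E$, i.e.\ that $\sum_n\|e_n\|<\infty$. The first term contributes $\sum_n|\alpha|^n\|e_1\|=\|e_1\|/(1-|\alpha|)$. The second term is a discrete convolution of the $\ell^1$ sequence $(\|f_k\|)$ with the $\ell^1$ sequence $(|\alpha|^j)$, so by Young's inequality its total norm is at most $\|f\|_E/(1-|\alpha|)$. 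Hence $e\in E$. By construction $Be+\alpha e=f$, and also $q_{c_0}(\pi_1 e)+\alpha\cdot 0=q_{c_0}(e_1)=y$. Thus $(0,e)$ is a preimage of $(y,f)$, and $S+\alpha I$ is surjective.

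\textbf{Main obstacle.} The only delicate point is the lifting step $q_{c_0}(e_1)=y$. Surjectivity of the quotient map supplies such an $e_1$, but there is no bounded linear way to choose it, by Proposition~\ref{prop:phillipsref}. This is harmless here, since the claim asserts only surjectivity and we make no linearity or boundedness demand on the choice of preimage.

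The remaining step is the $\ell^1$ summability estimate, which is routine for $|\alpha|<1$. Note also that the case $\alpha=0$ is included: the recursion then reads $e_{n+1}=f_n$.
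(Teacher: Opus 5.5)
Your proof is correct, but for $\alpha\neq0$ it takes a different route from the paper's. The paper splits into two cases. For $\alpha=0$ it does exactly what you do: it lifts the $Q$-component to some $a\in\ell^\infty$ and sets $e=(a,f_1,f_2,\ldots)$. For $0<|\alpha|<1$ it instead takes $e=F(I+\alpha F)^{-1}f$, whose first coordinate is $0$, and absorbs the $Q$-component into $x=u/\alpha$. The Neumann series for $I+\alpha F$ then replaces your explicit recursion and $\ell^1$ convolution estimate.

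Your version has some advantages. It is uniform in $\alpha$, it needs no division by $\alpha$ (hence no case split), and it shows the slightly stronger fact that $S+\alpha I$ already maps $\{0\}\oplus E$ onto $Y$. The price is that it uses the non-linear, unbounded lift through $q_{c_0}$ for every $\alpha$.

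The paper's choice has a different advantage. For $\alpha\neq0$ the preimage $(u/\alpha,\,F(I+\alpha F)^{-1}f)$ depends linearly and boundedly on $(u,f)$. So $S+\alpha I$ is in fact right invertible for $0<|\alpha|<1$, and the failure of right invertibility of $S$ coming from Phillips' theorem is visibly concentrated at $\alpha=0$.
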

\begin{proof}We distinguish two cases.
\vskip3pt

\emph{Case 1.\,} $\alpha=0$.

Let $(u,f)\in Y$ so that $u\in Q$ and $f=(f_1,f_2,\dots)\in E$. Since $q_{c_0}$ is surjective, there exists $a\in\ell^\infty$ such that $q_{c_0}(a)=u$. Define
\begin{equation*}
e:=(a,f_1,f_2,\ldots)\in E.
\end{equation*}
Then $\pi_1e=a$ and hence $Be=f$. Hence
\begin{equation*}
S(0,e)=\bigl(q_{c_0}(\pi_1 e),Be\bigr)=\bigl(q_{c_0}(a),f\bigr)=(u,f).
\end{equation*}
Since $(u,f)\in Y$ was arbitrary, $S$ is surjective.
\vskip3pt

\emph{Case 2.\,} $0<|\alpha|<1$.

Let $F\colon E\to E$ be the forward shift,
\begin{equation*}
Fe=F(e_1,e_2,\ldots):=(0,e_1,e_2,\ldots),\qquad e=(e_1,e_2,\ldots)\in E.
\end{equation*}
Then $BF=I_E$ and $\|F\|=1$. Since $|\alpha|\|F\|<1$, $I+\alpha F$ is invertible by the Neumann series. Given $(u,f)\in Y$, define
\begin{equation*}	
g:=(I+\alpha F)^{-1}f,
\qquad
e:=Fg,
\qquad
x:=\frac{u-q_{c_0}(\pi_1 e)}{\alpha}.
\end{equation*}
From the last equation, we have
\begin{equation*}
q_{c_0}(\pi_1 e)+\alpha x=u,	
\end{equation*}
while the first two equations, combined with $BF=I$, yield
\begin{equation*}
Be+\alpha e=BFg+\alpha Fg=g+\alpha Fg=(I+\alpha F)g=f.	
\end{equation*}
Hence
\begin{equation*}
(S+\alpha I)(x,e)
=\bigl(q_{c_0}(\pi_1e)+\alpha x,\;Be+\alpha e\bigr)=(u,f).	
\end{equation*}
Since $(u,f)\in Y$ was arbitrary, $S+\alpha I$ is surjective.
\end{proof}

\begin{claim}
The operator $S$ has no bounded linear right inverse.
\end{claim}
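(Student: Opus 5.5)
We argue by contradiction and reduce to Proposition~\ref{prop:phillipsref}. Suppose $V\in L(Y)$ satisfies $SV=I_Y$. The idea is to feed $S$ only inputs whose second coordinate is zero, and then read off a right inverse of $q_{c_0}$ from the first entry of the preimage.

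First we set up the relevant maps. Let $\iota\colon Q\to Y$ be the embedding $\iota(u)=(u,0)$. Let $p_E\colon Y\to E$ be the coordinate projection $p_E(x,e)=e$. Both are bounded, with norm at most one.

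Now define
\[
W:=\pi_1\, p_E\, V\,\iota\colon Q\to\ell^\infty .
\]
As a composition of bounded linear maps, $W$ is bounded and linear.

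Next we check that $W$ is a right inverse of $q_{c_0}$. Fix $u\in Q$ and write $V\iota(u)=(x,e)$ with $e=(e_1,e_2,\ldots)\in E$. The identity $SV=I_Y$ gives
\[
\bigl(q_{c_0}(\pi_1 e),Be\bigr)=S(x,e)=(u,0).
\]
Comparing first coordinates yields $q_{c_0}(e_1)=u$. Since $e_1=\pi_1 p_E V\iota(u)=Wu$, this says $q_{c_0}Wu=u$. Hence $q_{c_0}W=I_Q$. The second coordinate gives $Be=0$, but we do not need it.

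This contradicts Proposition~\ref{prop:phillipsref}, which says $q_{c_0}$ has no bounded linear right inverse. Therefore $S$ has no bounded linear right inverse.

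There is no real obstacle here; the only point needing care is that $W$ is a composition of bounded maps, so that it is genuinely a bounded operator.
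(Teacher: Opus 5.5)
Your proof is correct and is essentially the paper's argument. The paper builds the same map $\sigma=\pi_1P_2R\iota\colon Q\to\ell^\infty$ and checks $q_{c_0}\sigma=I_Q$ through the operator identity $P_1S=q_{c_0}\pi_1P_2$, whereas you verify the same identity pointwise by comparing first coordinates of $S(x,e)=(u,0)$.
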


\begin{proof}
Suppose, by way of contradiction, that there exists an operator $R\in L(Y)$ such that 
\begin{equation}\label{eq:Srightinv2}
SR=I_Y.	
\end{equation}
Let $P_1\colon Y\to Q$ and $P_2\colon Y\to E$ be the canonical projections,
\begin{equation*}
P_1(x,e):=x\qquad\text{and}\qquad P_2(x,e):=	e,\qquad x\in Q,\,e\in E.
\end{equation*}
Define $\iota\colon Q\to Y$ by
\begin{equation*}
	\iota x=(x,0),\qquad x\in Q,
\end{equation*}
where $0$ denotes the zero element of~$E$. Evidently,
\begin{equation}\label{eq:iotarel}
P_1\iota=I_Q.	
\end{equation}
By the definition of $S$, we have
\begin{equation}\label{eq:Sprojrel}
P_1S=q_{c_0}\pi_1 P_2.
\end{equation}
Then $\sigma:=\pi_1P_2R\iota:Q\to\ell^\infty$, as a product of bounded linear operators, is bounded linear. Furthermore,
\begin{equation*}	
q_{c_0}\sigma=q_{c_0}\pi_1P_2 R\iota
\overset{\text{\eqref{eq:Sprojrel}}}{=}P_1SR\iota
\overset{\text{\eqref{eq:Srightinv2}}}{=}P_1\iota
\overset{\text{\eqref{eq:iotarel}}}{=}I_Q.
\end{equation*}
Thus, $\sigma$ is a bounded linear right inverse of~$q_{c_0}$ contradicting Proposition~\ref{prop:phillipsref}.
\end{proof}
Define
\begin{equation*}
	X:=Y\oplus_1Y
\end{equation*}
and $T\colon X\to X$ by
\begin{equation}\label{eq:Tdef}
T(x,y):=(y,x-3Sy), \qquad x\in Y,\,\,y\in Y.
\end{equation}
Since~$S$ is bounded and linear, it is easy to show that~$T$ has the same properties.
\begin{claim}
$T\in L(X)$ is invertible.
\end{claim}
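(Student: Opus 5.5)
The plan is to write down an explicit candidate for $T^{-1}$ and check by direct composition that it is a two-sided inverse in $L(X)$. Since $T(x,y)=(y,x-3Sy)$, solving $T(x,y)=(u,v)$ gives $y=u$ and $x=v+3Su$. This suggests defining $V\colon X\to X$ by
\begin{equation*}
V(u,v):=(v+3Su,\,u),\qquad u,v\in Y.
\end{equation*}

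First I will check that $V$ is linear and bounded. This follows from $S\in L(Y)$ together with the estimate
\begin{equation*}
\|V(u,v)\|_X=\|v+3Su\|_Y+\|u\|_Y\le(1+3\|S\|)\bigl(\|u\|_Y+\|v\|_Y\bigr)=(1+3\|S\|)\|(u,v)\|_X.
\end{equation*}
For completeness, the boundedness of $T$ follows from the same kind of estimate, which gives $\|T\|\le 1+3\|S\|$.

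Next I compute the two compositions. For the first,
\begin{equation*}
TV(u,v)=T(v+3Su,u)=(u,\,v+3Su-3Su)=(u,v).
\end{equation*}
For the second,
\begin{equation*}
VT(x,y)=V(y,x-3Sy)=(x-3Sy+3Sy,\,y)=(x,y).
\end{equation*}
Hence $TV=VT=I_X$, so $T\in GL(X)$ with $T^{-1}=V$.

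There is no real obstacle here. The only point needing care is the order of the coordinates in $V$, so that both compositions cancel the $3Sy$ term correctly.
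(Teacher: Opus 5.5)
Your proof is correct and is the paper's argument: the paper defines the same inverse $W(x,y)=(y+3Sx,x)$ and checks $TW=WT=I_X$ by the same two compositions. Your bound $(1+3\|S\|)$ simply makes explicit the boundedness that the paper calls clear.
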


\begin{proof}

Define $W\colon X\to X$ by
\begin{equation*}
W(x,y):=(y+3Sx,x),\qquad x\in Y,\,\,y\in Y.
\end{equation*}
Clearly, $W$ is linear and bounded. For every $x\in Y$ and $y\in Y$,
\begin{equation*}
T W(x,y)=T(y+3Sx,x)=(x,y),
\end{equation*}
and
\begin{equation*}
W T(x,y)=W(y,x-3Sy)=(x,y).
\end{equation*}
Since $TW=WT=I_X$, $T$ is invertible and $T^{-1}=W$.

\end{proof}

\begin{claim}
For every $\lambda\in\T$, the operator $\lambda I-T$ is surjective.
\end{claim}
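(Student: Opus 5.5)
Given $\lambda\in\T$ and $(u,v)\in X$, I will solve $(\lambda I-T)(x,y)=(u,v)$ directly and reduce it to the surjectivity of an operator $S+\alpha I$ with $|\alpha|<1$, which is guaranteed by the first Claim. By \eqref{eq:Tdef},
\begin{equation*}
(\lambda I-T)(x,y)=(\lambda x-y,\;\lambda y-x+3Sy).
\end{equation*}
So the system to solve is $\lambda x-y=u$ and $\lambda y-x+3Sy=v$. The first equation gives $x=\lambda^{-1}(u+y)$ (using $\lambda\neq0$). Substituting into the second equation yields
\begin{equation*}
3Sy+(\lambda-\lambda^{-1})y=v+\lambda^{-1}u,
\end{equation*}
that is,
\begin{equation*}
\Bigl(S+\tfrac{\lambda-\lambda^{-1}}{3}I\Bigr)y=\tfrac13\bigl(v+\lambda^{-1}u\bigr).
\end{equation*}

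Set $\alpha_\lambda:=(\lambda-\lambda^{-1})/3$. Since $|\lambda|=1$, we have $\lambda^{-1}=\bar\lambda$. Hence $\lambda-\lambda^{-1}=2i\operatorname{Im}\lambda$, so $|\alpha_\lambda|\le 2/3<1$. By the first Claim, $S+\alpha_\lambda I$ is surjective, so there exists $y\in Y$ solving the displayed equation. Define $x:=\lambda^{-1}(u+y)$. Then both equations hold, so $(\lambda I-T)(x,y)=(u,v)$. Since $(u,v)$ was arbitrary, $\lambda I-T$ is surjective.

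There is no real obstacle here. The only point to check is that the coefficient $3$ in the definition of $T$ forces $|\alpha_\lambda|<1$ uniformly on $\T$, which the bound $|\lambda-\bar\lambda|\le2$ gives. Note that $\lambda=\pm1$ produces $\alpha_\lambda=0$, which is Case~1 of the first Claim. This is also where the failure of right invertibility will later enter, since at $\lambda=1$ the reduction is to $S$ itself.
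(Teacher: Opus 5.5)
Your proof is correct and follows essentially the same route as the paper: both reduce $(\lambda I-T)(x,y)=(u,v)$ to the surjectivity of $S+\alpha_\lambda I$, with the same $\alpha_\lambda=(\lambda^2-1)/(3\lambda)$ satisfying $|\alpha_\lambda|\le 2/3$, via Claim~1. The only difference is cosmetic: you eliminate $x$ and solve for $y$ first, which gives the slightly cleaner right-hand side $\tfrac13(v+\lambda^{-1}u)$, whereas the paper solves for $x$ with right-hand side $(v+\lambda u+3Su)/(3\lambda)$ and then sets $y=\lambda x-u$.
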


\begin{proof}
Fix $\lambda\in\mathbb T$ and let $(u,v)\in X$ be arbitrary. Define
\begin{equation}\label{eq:alphalambda}
\alpha_\lambda:=\frac{\lambda^2-1}{3\lambda}.
\end{equation}
Since $|\lambda|=1$, we have
\begin{equation*}
|\alpha_\lambda|
=\frac{|\lambda^2-1|}{3}
\leq\frac{|\lambda|^2+1}{3}
=\frac{2}{3}<1.
\end{equation*}
By Claim~1, this implies that $S+\alpha_\lambda I$ is surjective. Therefore, there exists $x\in Y$ such that
\begin{equation}\label{eq:perturbedS}
\left(S+\alpha_\lambda I\right)x
=\frac{v+\lambda u+3Su}{3\lambda}.
\end{equation}
Define
\begin{equation}\label{eq:ydef}
y:=\lambda x-u.
\end{equation}
Then
$$
\begin{aligned}
(\lambda I-T)(x,y)
&\overset{\text{\eqref{eq:Tdef}}}{=}(\lambda x-y,\lambda y-x+3Sy)\\
&\overset{\text{\eqref{eq:ydef}}}{=}\bigl(u,\lambda(\lambda x-u)-x+3S(\lambda x-u)\bigr)\\
&=\bigl(u,(\lambda^2-1)x+3\lambda Sx-\lambda u-3Su\bigr)\\
&=\biggl(u, 3\lambda\biggl(S+\frac{\lambda^2-1}{3\lambda}I\biggr)x-\lambda u-3Su\biggr)\\
&\overset{\text{\eqref{eq:alphalambda}}}{=}\bigl(u,3\lambda\bigl(S+\alpha_\lambda I\bigr)x-\lambda u-3Su\bigr)\\
&\overset{\text{\eqref{eq:perturbedS}}}{=}(u,v).
\end{aligned}
$$
Since $\lambda\in\mathbb T$ and $(u,v)\in X$ were arbitrary,
$\lambda I-T$ is surjective for every $\lambda\in\mathbb T$.
\end{proof}

\begin{claim}
$1\in\sigma_r(T)$.
\end{claim}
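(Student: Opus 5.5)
I argue by contradiction: assume $I-T$ has a bounded right inverse and extract from it a bounded right inverse of $S$, contradicting Claim~2.

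Suppose there is $R\in L(X)$ with $(I-T)R=I_X$. First compute $I-T$ explicitly from \eqref{eq:Tdef}:
\begin{equation*}
(I-T)(x,y)=(x-y,\;y-x+3Sy),\qquad x,y\in Y.
\end{equation*}
The structure is visible after adding the first component to the second, which gives $3Sy$. So the key identity is that if $(I-T)(x,y)=(u,v)$, then $3Sy=u+v$. Concretely, let $J\colon Y\to X$ be $Jw=(0,w)$ and let $P_2\colon X\to Y$ be $P_2(x,y)=y$; both are bounded. For $w\in Y$ write $RJw=(x,y)$. Then $(I-T)(x,y)=(0,w)$, so $x=y$ and $3Sy=w$. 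Define
\[
V:=\tfrac13 P_2RJ\in L(Y).
\]
Then $SVw=\tfrac13 Sy=\tfrac13 w$ multiplied by $3$, i.e.\ $SVw=w$. Checking the constant once more: $3Sy=w$ gives $Sy=w/3$, so $S(P_2RJ)w=w/3$, and hence $V=3P_2RJ$ satisfies $SV=I_Y$. I will fix the constant carefully in the write-up.

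Thus $V$ is a bounded linear right inverse of $S$, contradicting Claim~2. Hence $I-T$ is not right invertible, i.e.\ $1\in\sigma_r(T)$.

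The only delicate point is choosing the test vectors $(0,w)$ and the linear combination of components that kills $x$. Everything else is bounded composition. As a concluding remark, Claims~4 and~5, combined with Theorem~\ref{thm:specban} and Corollary~\ref{cor:ghrh}, finish Theorem~A.
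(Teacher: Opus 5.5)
Your argument is correct and is essentially the paper's proof. Both arguments test $(I-T)R=I_X$ on vectors of the form $(0,\cdot)$, add the two components to eliminate $x$ (leaving $3SP_2RJ$), and conclude that a multiple of $P_2RJ$ is a bounded right inverse of $S$, contradicting Claim~2. In the write-up, use $V=3P_2RJ$ throughout and drop the initially stated $\tfrac13P_2RJ$; the paper builds this factor in from the start by taking $Jz=(0,3z)$.
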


\begin{proof}
Suppose, by way of contradiction, that $1\notin\sigma_r(T)$; that is, there exists $R\in L(X)$ such that
\begin{equation}\label{eq:Rid}
	(I-T)R=I_X.
\end{equation}
Define $J\colon Y\to X$ by
\begin{equation*}
	Jz=(0,3z),\qquad z\in Y.
\end{equation*}
Let $P_1,P_2\colon X\to Y$ be the canonical projections,
\begin{equation*}
P_1(x,y):=x\qquad\text{and}\qquad P_2(x,y):=	y,\qquad x\in Y,\,y\in Y.
\end{equation*}
By~\eqref{eq:Tdef}, we have
\begin{equation*}
(I-T)(x,y)=(x-y,-x+y+3Sy),\qquad x\in Y,\,y\in Y.
\end{equation*}
Let $z\in Y$ be arbitrary. Applying the last identity to
\begin{equation*}
	(x,y)=RJz=(P_1RJz,P_2RJz),
\end{equation*}
we obtain
\begin{equation}\label{eq:firstid}
(I-T)RJz=(P_1RJz-P_2 RJz,-P_1RJz+P_2 RJz+3SP_2RJz).
\end{equation}
On the other hand, by~\eqref{eq:Rid}, we have
\begin{equation}\label{eq:secondid}
(I-T)RJz=Jz=(0,3z).	
\end{equation}
Since $z\in Y$ was arbitrary,  comparing the corresponding coordinates on the right hand side of Eqs.~\eqref{eq:firstid} and~\eqref{eq:secondid}, we obtain the operator equations
\begin{gather*}
P_1RJ-P_2RJ=0,\\
-P_1RJ+P_2RJ+3SP_2RJ=3I_Y.	
\end{gather*}
Adding the last two equations, we find that
\begin{equation}\label{eq:Srightinv}
	SP_2RJ=I_Y.
\end{equation}
Since $J$, $R$ and $P_2$ are bounded linear operators, $L:=P_2RJ$ is also bounded and linear. Moreover, in view of~\eqref{eq:Srightinv}, $L$ is a bounded linear right inverse of~$S$ contradicting Claim~2.
\end{proof}

We can now easily complete the proof of Theorem~\ref{thm:A}. By Claim~3, $T\in GL(X)$. By Claim~4, $\sigma_s(T)\cap\mathbb T=\emptyset$. By Theorem~\ref{thm:specban}, this guarantees that $T$ has the shadowing property. By Claim~5, $\sigma_r(T)\cap\mathbb T\neq\emptyset$. By Corollary~\ref{cor:ghrh}, this shows that $T$ is not generalized hyperbolic.

\end{proof}

Now we give a proof of Theorem~\ref{thm:B}.

\begin{proof}[Proof of Theorem~\ref{thm:B}]
According to Theorem~\ref{thm:specuniexp}, generalized hyperbolicity implies the shadowing property even for Banach space operators. It remains to prove the converse. Suppose that $X$ is a separable complex Hilbert space and $T\in GL(X)$ has the shadowing property. We will show that $T$ is generalized hyperbolic. By
Corollary~\ref{cor:spechilb}, 
$\sigma_r(T)\cap\mathbb T=\emptyset$.
Since $\rho_r(T)$ is an open subset of~$\mathbb C$ and contains the compact set $\mathbb T$,
there exists $\epsilon\in(0,1)$ such that
$
\overline{\mathbb T_\epsilon}\subset\rho_r(T)
$.
We will show that the function $\operatorname{nul}(\lambda I-T)=\dim\ker(\lambda I-T)$ is locally constant in $\lambda\in\rho_r(T)$.\footnote{Although the local constancy of the nullity can be deduced from \cite[Lemmas~9.4 and~9.5]{A}, we give a detailed argument for completeness.}
 (The symbol $\operatorname{dim}$ denotes the Hilbert dimension, the cardinality of an orthonormal basis.) To prove the claim, fix an arbitrary $\lambda_0\in\rho_r(T)$. Since  $\lambda_0 I-T$ is right invertible, there exists $R_0\in L(X)$ such
that
\begin{equation}\label{eq:zerorightinv}
(\lambda_0 I-T)R_0=I.
\end{equation}
It is known that if $|\lambda-\lambda_0|\|R_0\|<1$, then 
\begin{equation}\label{eq:defU}
	U(\lambda):=I+(\lambda-\lambda_0)R_0
\end{equation}
is invertible and its inverse $[U(\lambda)]^{-1}$ is given by the Neumann series. Therefore, we can define
\begin{equation}\label{eq:defR}
R(\lambda)=R_0 [U(\lambda)]^{-1}\qquad\text{for $\lambda\in D(\lambda_0,\delta_0)$},	
\end{equation}
where
\begin{equation*}
D(\lambda_0,\delta_0):=\{\,\lambda\in\mathbb C:|\lambda-\lambda_0|<\delta_0\,\}\qquad\text{with $\delta_0:=\frac{1}{\|R_0\|}$}.	
\end{equation*}
For $\lambda\in D(\lambda_0,\delta_0)$, 
\begin{align*}
(\lambda I-T)R(\lambda)&\overset{\text{\eqref{eq:defR}}}{=}[(\lambda_0 I-T)+(\lambda-\lambda_0)I]R_0[U(\lambda)]^{-1}\\
&=[(\lambda_0 I-T)R_0+(\lambda-\lambda_0)R_0][U(\lambda)]^{-1}\\
&\overset{\text{\eqref{eq:zerorightinv}}}{=}[I+(\lambda-\lambda_0)R_0][U(\lambda)]^{-1}
\overset{\text{\eqref{eq:defU}}}{=}U(\lambda)[U(\lambda)]^{-1}=I.
\end{align*}
Thus,
\begin{equation}\label{eq:rightinvR}
(\lambda I-T)R(\lambda)=I,\qquad \lambda\in D(\lambda_0,\delta_0).	
\end{equation}
We claim that
\begin{equation}\label{eq:defP}
P(\lambda):=R(\lambda)(\lambda I-T)\quad\text{is a projection for $\lambda\in D(\lambda_0,\delta_0)$},
\end{equation}
and
\begin{equation}\label{eq:Pkerid}
\ker P(\lambda)=\ker(\lambda I-T),\qquad \lambda\in D(\lambda_0,\delta_0).
\end{equation}
Indeed,
\begin{equation*}
	P^2(\lambda)=P(\lambda)P(\lambda)=R(\lambda)[(\lambda I-T)R(\lambda)](\lambda I-T)
	\overset{\text{\eqref{eq:rightinvR}}}{=}
	R(\lambda)(\lambda I-T)=P(\lambda)
\end{equation*}
for $\lambda\in D(\lambda_0,\delta_0)$. Thus, \eqref{eq:defP} holds. By~\eqref{eq:defP}, $\ker(\lambda I-T)\subseteq\ker P(\lambda)$, while
\begin{equation*}
\lambda I-T=I(\lambda I-T)\overset{\text{\eqref{eq:rightinvR}}}{=}(\lambda I-T)R(\lambda)(\lambda I-T)\overset{\text{\eqref{eq:defP}}}{=}(\lambda I-T)P(\lambda)
\end{equation*}
implies the converse inclusion $\ker P(\lambda)\subseteq\ker(\lambda I-T)$ for $\lambda\in D(\lambda_0,\delta_0)$. Thus, \eqref{eq:Pkerid} also holds. From~\eqref{eq:defP} and~\eqref{eq:Pkerid}, it follows that
\begin{equation}\label{eq:dirP}
	X=\ker P(\lambda)\oplus\operatorname{ran}P(\lambda)=\ker(\lambda I-T)\oplus\operatorname{ran}P(\lambda),\qquad\lambda\in D(\lambda_0,\delta_0).
\end{equation}
Since both $[U(\lambda)]^{-1}$ and $\lambda I-T$ are surjective for $\lambda\in D(\lambda_0,\delta_0)$, \eqref{eq:defP} and~\eqref{eq:defR} imply
\begin{equation*}
	\operatorname{ran}P(\lambda)=\operatorname{ran}R(\lambda)=\operatorname{ran}R_0=:E,\qquad\lambda\in D(\lambda_0,\delta_0).
\end{equation*}
This, together with~\eqref{eq:dirP}, yields
\begin{equation}\label{eq:commoncompl}
X=\ker(\lambda I-T)\oplus E,\qquad\lambda\in D(\lambda_0,\delta_0).
\end{equation}
As a range of a projection, $E=\operatorname{ran}P(\lambda_0)$ is a closed subspace of~$X$.
 By \cite[Corollary~3.2.16, p.~299]{M}, \eqref{eq:commoncompl} implies that all  subspaces $\ker(\lambda I-T)$, $\lambda\in D(\lambda_0,\delta_0)$, are isomorphic to the same quotient space $X/E$, and hence they are isomorphic to each other.
It is known (see \cite[Corollary~5.5.2.6, p.~194]{C}) that isomorphic Hilbert spaces have the same Hilbert dimension. Hence
\begin{equation*}
	\dim\ker(\lambda I-T)=\dim\ker(\lambda_0 I-T),\qquad \lambda\in D(\lambda_0,\delta_0).
\end{equation*}
Since $\lambda_0\in\rho_r(T)$ was arbitrary, this proves that $\operatorname{nul}(\lambda I-T)$ is locally constant in $\lambda\in\rho_r(T)$. The annulus~$\mathbb T_\epsilon$ is connected, therefore, the locally constant function $\operatorname{nul}(\lambda I-T)$ is in fact constant for $\lambda\in\mathbb T_\epsilon$. By Proposition~\ref{prop:sapostol}, applied with $\Omega=\mathbb T_\epsilon$, we conclude that~$T$ admits a right resolvent function on~$\mathbb T_\epsilon$. Thus, Theorem~\ref{thm:C} applies and guarantees that $T$ is generalized hyperbolic.
\end{proof}

\section*{Acknowledgements}
The author wishes to thank Professor Davor Dragi\v{c}evi\'c for many helpful discussions.


\begin{thebibliography}{99}

\bibitem{All}
G. R. Allan,
\textit{Holomorphic vector-valued functions on a domain of holomorphy}, J.~Lond. Math. Soc. \textbf{4} (1967), 509--513.

\bibitem{AMV}
M. B. Antunes, G. E. Mantovani, and R. Var{\~a}o,
\textit{Chain recurrence and positive shadowing in linear dynamics},
J. Math. Anal. Appl. \textbf{506} (2022), 125622.

\bibitem{A}
C.~Apostol, L.~A.~Fialkow, D.~A.~Herrero, and D.~Voiculescu,
Approximation of Hilbert Space Operators, Vol.~II,
Research Notes in Mathematics, vol.~102,
Pitman, Boston, 1984.

\bibitem{BCDFP}
N. C. Bernardes Jr., B. M. Caraballo, U. B. Darji, V. V. F\'avaro, and A.~Peris,
\textit{Generalized hyperbolicity, stability and expansivity for operators on locally convex spaces}, J. Func. Anal. \textbf{288} (2025), 110696.

 \bibitem{Ber}
 N. Bernardes Jr., P. R. Cirilo, U. B. Darji, A. Messaoudi, and E. R. Pujals,
\textit{Expansivity and shadowing in linear dynamics}, J. Math. Anal. Appl. \textbf{461} (2018), 796--816.
 
 \bibitem{BeMe}
 N. Bernardes Jr. and A. Messaoudi, 
\textit{Shadowing and structural stability for operators}, Ergodic Theory Dynam. Systems \textbf{41} (2021), 961--980.
 
 \bibitem{BP}
 N. C. Bernardes Jr. and A. Peris, 
\textit{On shadowing and chain recurrence in linear dynamics}, Adv. Math. \textbf{441} (2024), 109539.
 
 \bibitem{B}
H.~Brezis,
Functional Analysis, Sobolev Spaces and Partial Differential Equations,
Universitext, Springer, New York, 2011.
 
 \bibitem{Cir}
 P.~R. Cirilo, B. Gollobit, and E.~Pujals, 
\textit{Dynamics of generalized hyperbolic linear operators}, Adv. Math. \textbf{387} (2021), 107830.
 
 \bibitem{C}
C. Constantinescu,
$C^*$-Algebras, Vol.~4: Hilbert Spaces,
North-Holland Mathematical Library,
Elsevier, Amsterdam, 2001. 
 
\bibitem{Dani}
E.~D'Aniello, U.~B.~Darji, and M.~Maiuriello,
\textit{Generalized hyperbolicity and shadowing in $L^p$ spaces}, J.~Differential Equations \textbf{298} (2021), 68--94.

\bibitem{DAnMa}
E.~D'Aniello and M.~Maiuriello,
\textit{On the spectrum of weighted shifts}, Rev. R. Acad. Cienc. Exactas F\'{\i}s. Nat. Ser. A Math. RACSAM \textbf{117} (2023), 1--19.

\bibitem{DP}
D.~Dragi\v{c}evi\'c and M.~Pituk,
\textit{Duality between shadowing and uniform expansivity in linear dynamics},
Trans. Amer. Math. Soc., 2026,
\href{https://doi.org/10.1090/tran/9879}{DOI 10.1090/tran/9879}.

 \bibitem{H}
D.~A.~Herrero,
Approximation of Hilbert Space Operators, Vol.~I,
Research Notes in Mathematics, Vol.~72,
Pitman, Boston, 1982.
 
 \bibitem{Hil}
E.~Hille and R. S.~Phillips,
 Functional Analysis and Semi-groups, American Mathematical Society, Providence, RI, 1957.

\bibitem{Iva}
 S.~Ivanov, \emph{On holomorphic relative inverses of operator-valued functions}, Pacific J.~Math. \textbf{78} (1978), 345--358.
 
\bibitem{LM}
K.~Lee and C.~A.~Morales,
\textit{Hyperbolicity, shadowing, and bounded orbits},
Qual. Theory Dyn. Syst. \textbf{21} (2022), 61,
\href{https://doi.org/10.1007/s12346-022-00588-9}{DOI 10.1007/s12346-022-00588-9}. 

 \bibitem{LM1}
J. Lee and C. A. Morales,
\textit{Hyperbolicity, shadowing, and convergent operators},
Monatsh. Math. \textbf{202} (2023), 541--554.

\bibitem{LM2}
J. Lee and C. A. Morales,
\textit{Shadowing lemmas for noninvertible operators and semigroups},
Rev. Mat. Complut. \textbf{38} (2025), 531--548.
 
 \bibitem{M}
R. E. Megginson,
An Introduction to Banach Space Theory,
Graduate Texts in Mathematics, Vol.~183,
Springer--Verlag, New York, 1998.

\bibitem{MNST}
A.~Messaoudi, J.~T.~Neto, M.~Saavedra, I.~Tsokanos,
\textit{On generalized hyperbolicity, stability and shadowing for linear operators},
\href{https://arxiv.org/abs/2608.17021}{arXiv:2608.17021}, 2026.

\bibitem{Mo}
C.~A. Morales,
Recent Topics on Linear Dynamics,
\href{https://arxiv.org/abs/2502.20666}{arXiv:2502.20666}, 2025.

\bibitem{Pal}
K. Palmer, Shadowing in Dynamical Systems, Theory and Applications, Kluwer, Dordrecht, 2000.
 
 \bibitem{Ph}
R.~S. Phillips,
\textit{On linear transformations},
Trans. Amer. Math. Soc. \textbf{48} (1940), 516--541.

\bibitem{Pil}
S. Yu. Pilyugin, 
Shadowing in Dynamical Systems, Lecture Notes in Mathematics, Vol.~1706, Springer--Verlag, Berlin, 1999.

\bibitem{P}
M.~Pituk,
\textit{Spectral characterization of shadowing for linear operators on Hilbert spaces},
\href{https://arxiv.org/abs/2511.12272}{arXiv:2511.12272}, 2025.

\bibitem{T}
A. E. Taylor,
Introduction to Functional Analysis,
John Wiley \& Sons, New York, 1958.

\end{thebibliography}
\end{document}